\def\figurename{Figure} 
\renewcommand{\fnum@figure}[1]{\figurename~\thefigure.}
\def\tablename{Table} 
\renewcommand{\fnum@table}[1]{\tablename~\thetable.}
\newtheorem{theorem}{Theorem}[section]
\newtheorem{lemma}[theorem]{Lemma}
\theoremstyle{definition}
\theoremstyle{remark}
\numberwithin{equation}{section}
\def\C{\mathbb C}
\def\R{\mathbb R}
\def\Z{\mathbb Z}
\def\N{\mathbb N}
\def\cB{\mathcal{B}}
\def\cE{\mathcal{E}}
\def\cK{\mathcal{K}}
\def\cM{\mathcal{M}}
\def\fA{\mathfrak{A}}
\def\fB{\mathfrak{B}}
\def\fa{\mathfrak{a}}
\def\fc{\mathfrak{c}}
\def\fb{\mathfrak{b}}
\def\fs{\mathfrak{s}}
\def\Op{\operatorname{Op}}
\def\eps{\varepsilon}
\def\Ind{\operatorname{Ind}}
\begin{document}
\title{\bfseries\scshape{ON REGULARIZATION OF MELLIN PDO'S\\ WITH SLOWLY OSCILLATING SYMBOLS\\ OF LIMITED SMOOTHNESS}}
\author{\bfseries\scshape Alexei Yu. Karlovich \thanks{E-mail address: oyk@fct.unl.pt}\\
Centro de Matem\'atica e Aplica\c{c}\~oes (CMA) and Departamento de Matem\'atica, \\
Faculdade de Ci\^encias e Tecnologia, Universidade Nova de Lisboa, \\
Quinta da Torre, 2829--516 Caparica, Portugal\\ \\
\bfseries\scshape Yuri I. Karlovich \thanks{E-mail address: karlovich@uaem.mx}\\
Facultad de Ciencias,
Universidad Aut\'onoma del Estado de Morelos,\\
Av. Universidad 1001, Col. Chamilpa,\\
C.P. 62209 Cuernavaca, Morelos, M\'exico\\ \\
\bfseries\scshape Amarino B. Lebre\thanks{E-mail address: alebre@math.tecnico.ulisboa.pt}\\
Departamento de Matem\'atica, Instituto Superior T\'ecnico,\\
Universidade de Lisboa, Av. Rovisco Pais, 1049--001 Lisboa, Portugal
\\ \\ \\ }

\date{}
\maketitle \thispagestyle{empty} \setcounter{page}{1}


\begin{abstract} \noindent
We study Mellin pseudodifferential operators (shortly, Mellin PDO's) with symbols
in the algebra $\widetilde{\cE}(\R_+,V(\R))$ of slowly oscillating functions of
limited  smoothness introduced in \cite{K09}. We show that if
$\fa\in\widetilde{\cE}(\R_+,V(\R))$ does not degenerate on the ``boundary" of
$\R_+\times\R$ in a certain sense,  then the Mellin PDO $\Op(\fa)$ is Fredholm
on the space $L^p$ for $p\in(1,\infty)$ and each its regularizer is of the form
$\Op(\fb)+K$ where $K$ is a compact operator on $L^p$ and $\fb$ is a certain
explicitly  constructed function in the same algebra $\widetilde{\cE}(\R_+,V(\R))$
such that  $\fb=1/\fa$ on the ``boundary" of $\R_+\times\R$. This result complements
a known Fredholm criterion from \cite{K09} for Mellin PDO's with symbols in the
closure of $\widetilde{\cE}(\R_+,V(\R))$.
\end{abstract}

\noindent {\bf AMS Subject Classification:} Primary 47G30; Secondary 47A53.

\vspace{.08in} \noindent \textbf{Keywords}:
Fredholmness,
regularizer,
Mellin pseudodifferential operator,
slowly oscillating symbol,
maximal ideal space.

\newpage

\pagestyle{fancy} \fancyhead{}
\fancyhead[EC]{A. Karlovich, Yu. Karlovich, and A. Lebre}
\fancyhead[EL,OR]{\thepage}
\fancyhead[OC]{On Regularization of Mellin PDO's}
\fancyfoot{}
\renewcommand\headrulewidth{0.5pt}

\section{Introduction}

Let $\cB(X)$ be the Banach algebra of all bounded linear operators
acting on a Banach space $X$, and let $\cK(X)$ be the ideal of all
compact operators in $\cB(X)$. An operator $A\in\cB(X)$ is called
\textit{Fredholm} if its image is closed and the spaces $\ker A$
and $\ker A^*$ are finite-dimensional.
 In that case the number
\[
\Ind A:=\dim\ker A-\dim\ker A^*
\]
is referred to as the {\it index} of $A$ (see, e.g.,
\cite[Sections~1.11--1.12]{BS06}, \cite[Chap.~4]{GK92}).
For bounded linear operators $A$ and $B$, we will write $A\simeq B$ if
$A-B\in\cK(X)$.

Recall that an  operator $B_r\in\cB(X)$ (resp. $B_l\in\cB(X)$)
is said to be a right  (resp. left) regularizer for $A$ if
\[
AB_r\simeq I \quad(\mbox{resp.}\quad B_lA\simeq I).
\]
It is well known that the operator $A$ is Fredholm on $X$ if and only if it
admits simultaneously a right and a left regularizers. Moreover, each right
regularizer differs from each left regularizer by a compact operator
(see, e.g., \cite[Chap.~4, Section 7]{GK92}). Therefore we may speak of a
regularizer $B=B_r=B_l$ of $A$ and two different regularizers of $A$ differ
from each other by a compact operator.

Let $d\mu(t)=dt/t$ be the (normalized) invariant measure on $\R_+$.
Consider the Fourier transform on $L^2(\mathbb{R}_+,d\mu)$, which is
usually referred to as the Mellin transform and is defined by
\[
\cM:L^2(\R_+,d\mu)\to L^2(\R),
\quad
(\cM f)(x):=\int_{\R_+} f(t) t^{-ix}\,\frac{dt}{t}.
\]
It is an invertible operator, with inverse given by
\[
{\cM^{-1}}:L^2(\R)\to L^2(\R_{+},d\mu),
\quad
({\cM^{-1}}g)(t)= \frac{1}{2\pi}\int_{\R}
g(x)t^{ix}\,dx.
\]
For $1<p<\infty$, let $\cM_p$ denote the Banach algebra of all Mellin multipliers,
that is, the set of all functions $a\in L^\infty(\R)$ such that $\cM^{-1}a\cM f\in L^p(\R_+,d\mu)$
and
\[
\|\cM^{-1}a\cM f\|_{L^p(\R_+,d\mu)}\le c_p\|f\|_{L^p(\R_+,d\mu)}
\quad\mbox{for all}\quad
f\in L^2(\R_+,d\mu)\cap L^p(\R_+,d\mu).
\]
If $a\in\cM_p$, then the operator
$f\mapsto \cM^{-1}a\cM f$ defined initially on $L^2(\R_+,d\mu)\cap L^p(\R_+,d\mu)$
extends to a bounded operator on $L^p(\R_+,d\mu)$. This operator is called the Mellin
convolution operator with symbol $a$.

Mellin pseudodifferential operators are generalizations of Mellin convolution
operators. Let $\fa$ be a sufficiently smooth function defined on $\R_+\times\R$.
The Mellin pseudodifferential operator (shortly, Mellin PDO) with symbol $\fa$
is initially defined for smooth functions $f$ of compact support by the iterated
integral
\[
[\Op(\fa) f](t)=
[\cM^{-1}\fa(t,\cdot)\cM f](t)=
\frac{1}{2\pi}\int_\R dx \int_{\R_+}
\fa(t,x)\left(\frac{t}{\tau}\right)^{ix}f(\tau) \frac{d\tau}{\tau}
\quad\mbox{for}\quad t\in\R_+.
\]

In 1991 Rabinovich \cite{R92} proposed to use Mellin pseudodifferential
operators  techniques to study singular integral operators
on slowly oscillating Carleson curves. This idea was exploited in a series of
papers by Rabinovich and coauthors (see, e.g., \cite{R95,R98} and
\cite[Sections~4.5--4.6]{RRS04} and the references therein). Rabinovich
stated in \cite[Theorem~2.6]{R98} a Fredholm criterion for Mellin PDO's
with $C^\infty$ slowly oscillating (or slowly varying) symbols
on the spaces $L^p(\R_+,d\mu)$ for $1<p<\infty$. Namely,
he considered symbols $\fa\in C^\infty(\R_+\times\R)$ such that
\begin{equation}\label{eq:Hoermander}
\sup_{(t,x)\in\R_+\times\R}
\big|(t\partial_t)^j\partial_x^k\fa(t,x)\big|(1+x^2)^{k/2}<\infty
\quad\mbox{for all}\quad j,k\in\Z_+
\end{equation}
and
\begin{equation}\label{eq:Grushin}
\lim_{t\to s}\sup_{x\in\R}
\big|(t\partial_t)^j\partial_x^k\fa(t,x)\big|(1+x^2)^{k/2}=0
\quad\mbox{for all}\quad j\in\N,\quad k\in\Z_+, \quad s\in\{0,\infty\}.
\end{equation}
Here and in what follows $\partial_t$ and $\partial_x$ denote the operators of
partial  differentiation with respect to $t$ and to $x$. Notice that
\eqref{eq:Hoermander}  defines nothing but the Mellin version of the H\"ormander
class $S_{1,0}^0(\R)$ (see, e.g., \cite{H67}, \cite[Chap.~2, Section~1]{K82} for
the definition of the  H\"ormander classes $S_{\varrho,\delta}^m(\R^n)$). If
$\fa$ satisfies  \eqref{eq:Hoermander}, then the Mellin PDO $\Op(\fa)$ is bounded
on the spaces $L^p(\R_+,d\mu)$ for $1<p<\infty$ (see, e.g.,
\cite[Chap.~VI, Proposition~4]{St93} for the corresponding Fourier PDO's).
Condition \eqref{eq:Grushin}
is the Mellin version of Grushin's definition of slowly varying symbols in the
first variable (see, e.g., \cite{G70}, \cite[Chap.~3, Defintion~5.11]{K82}).

The above mentioned results have a disadvantage that the smoothness conditions
imposed on slowly oscillating symbols are very strong. In this paper we will
use a much weaker notion of slow oscillation, which goes back to Sarason \cite{S77}.
A bounded continuous function $f$ on $\R_+=(0,\infty)$ is called slowly oscillating
at $0$ and $\infty$ if
\[
\lim_{r\to s}\max_{t,\tau\in[r,2r]}|f(t)-f(\tau)|=0
\quad\mbox{for}\quad
s\in\{0,\infty\}.
\]
This definition can be extended to the case of bounded continuous
functions on $\R_+$ with values in a Banach space $X$.

The set $SO(\R_+)$ of all slowly oscillating functions forms a $C^*$-algebra.
This algebra properly contains $C(\overline{\R}_+)$, the $C^*$-algebra of all
continuous functions on $\overline{\R}_+:=[0,+\infty]$. For a unital commutative
Banach algebra $\fA$, let $M(\mathfrak{A})$ denote its maximal ideal space.
Identifying the points $t\in\overline{\R}_+$ with the evaluation functionals
$t(f)=f(t)$ for $f\in C(\overline{\R}_+)$, we get
$M(C(\overline{\R}_+))=\overline{\R}_+$. Consider the fibers
\[
M_s(SO(\R_+))
:=
\big\{\xi\in M(SO(\R_+)):\xi|_{C(\overline{\R}_+)}=s\big\}
\]
of the maximal ideal space $M(SO(\R_+))$ over the points
$s\in\{0,\infty\}$. By \cite[Proposition~2.1]{K09}, the set
\[
\Delta:=M_0(SO(\R_+))\cup M_\infty(SO(\R_+))
\]
coincides with $(\operatorname{clos}_{SO^*}\R_+)\setminus\R_+$
where $\operatorname{clos}_{SO^*}\R_+$ is the weak-star closure
of $\R_+$ in the dual space of $SO(\R_+)$. Then $M(SO(\R_+))=\Delta\cup\R_+$.

The second author \cite{K06} developed a Fredholm theory for Fourier
pseudodifferential operators with slowly oscillating $V(\R)$-valued symbols
where $V(\R)$ is the Banach algebra of absolutely continuous functions
of bounded total variation on $\R$. Those results were translated to the
Mellin setting in \cite{K09}. In particular, the important algebra
$\widetilde{\cE}(\R_+,V(\R))$ of slowly oscillating $V(\R)$-valued functions
was introduced and a Fredholm criterion for Mellin PDO's with symbols in the
closure of $\widetilde{\cE}(\R_+,V(\R))$ in the norm of the Banach algebra
$C_b(\R_+,C_p(\R))$ of bounded continuous $C_p(\R)$-valued functions was
obtained on the space $L^p(\R,d\mu)$ for all $p\in(1,\infty)$ \cite[Theorem~4.3]{K09}.
Here $C_p(\R)$  is the smallest closed subalgebra of the algebra $\cM_p(\R)$ that
contains the algebra $V(\R)$. We refer, e.g., to \cite[Sections~9.1--9.7]{BS06},
\cite[Chap.~1]{D79}, \cite[Section~2.1]{HRS94}, \cite[Section~4.2]{RSS11},
and \cite{SM86} for properties of the algebras $V(\R)$, $C_p(\R)$, and $\cM_p(\R)$.

For symbols in the algebra $\widetilde{\cE}(\R_+,V(\R))$ the above mentioned
Fredholm criterion has a simpler form \cite[Theorem~3.6]{KKL14}. That result
was already used in \cite{K14} (see also \cite{KKL14}) to prove that the
simplest weighted singular integral operator with two shifts
\begin{equation}\label{eq:simplest-SIOS}
U_\alpha P_\gamma^++U_\beta P_\gamma^-
\end{equation}
is Fredholm of index zero on the space $L^p(\R_+)$ with $p\in(1,\infty)$,
where $\alpha,\beta:\R_+\to\R_+$ are
orientation preserving diffeomorphisms with the only fixed points $0$ and $\infty$
such that $\log\alpha',\log\beta'$ are bounded, $\alpha',\beta'\in SO(\R_+)$,
\[
U_\alpha f=(\alpha')^{1/p}(f\circ\alpha),
\quad
U_\beta f=(\beta')^{1/p}(f\circ\beta),
\quad
P_\gamma^\pm:=(I\pm S_\gamma)/2,
\]
and $S_\gamma$ is the weighted Cauchy singular integral operator given by
\[
(S_\gamma f)(t):=\frac{1}{\pi i}
\int_{\R_+}\left(\frac{t}{\tau}\right)^\gamma\frac{f(\tau)}{\tau-t}d\tau
\]
with $\gamma\in\C$ satisfying $0<1/p+\Re\gamma<1$ (for $\gamma=0$ this result
was obtained in \cite{KKL14}). To study more general operators than
\eqref{eq:simplest-SIOS} in the forthcoming paper \cite{KKL-preparation},
we need not only a Fredholm criterion for $\Op(\fa)$ with
$\fa\in\widetilde{\cE}(\R_+,V(\R))$ given in \cite[Theorem~3.6]{KKL14},
but also an information on the regularizers of $\Op(\fa)$. Note that a
full description of the regularizers of a Fredholm Mellin PDO $\Op(\fa)$
is available if $\fa\in C^\infty(\R_+\times\R)$ satisfies
\eqref{eq:Hoermander}--\eqref{eq:Grushin}, see \cite[Theorem~2.6]{R98}),
however such a description is missing for the algebra $\widetilde{\cE}(\R_+,V(\R))$.

The aim of this paper is to fill in this gap and to complement the Fredholm
criterion for Mellin PDO's with symbols in $\widetilde{\cE}(\R_+,V(\R))$. Here
we provide an explicit description of all regularizers of a Fredholm operator
$\Op(\fa)$ with $\fa\in\widetilde{\cE}(\R_+,V(\R))$. Namely, we prove that if
$\fa\in\widetilde{\cE}(\R_+,V(\R))$ does not degenerate on the ``boundary" of
$\R_+\times\R$ in a certain sense, then the Mellin PDO $\Op(\fa)$ is Fredholm
on the space $L^p(\R_+,d\mu)$ for $p\in(1,\infty)$ and each its regularizer is
of the form $\Op(\fb)+K$ where $K$ is a compact operator on $L^p(\R_+,d\mu)$
and $\fb$ is a certain explicitly  constructed function in the same algebra
$\widetilde{\cE}(\R_+,V(\R))$ such that  $\fb=1/\fa$ on the ``boundary" of
$\R_+\times\R$. By the ``boundary" of $\R_+\times\R$ we mean the set
\begin{equation}\label{eq:boundary}
(\R_+\times\{\pm\infty\})\cup(\Delta\times\overline{\R}).
\end{equation}

The paper is organized as follows. In Section~\ref{sec:Boundedness}
we define the algebra $C_b(\R_+,V(\R))$ of all bounded continuous
$V(\R)$-valued functions and state that if $\fa\in C_b(\R_+,V(\R))$, then
$\Op(\fa)$ is bounded on $L^p(\R_+,d\mu)$. In Section~\ref{sec:Compactness}
we introduce the algebra $SO(\R_+,V(\R))$ of slowly oscillating $V(\R)$-valued
functions (a generalization of $SO(\R_+))$ and its subalgebra $\cE(\R_+,V(\R))$.
Further we explain how the values of a function $\fa\in\cE(\R_+,V(\R))$ on the
boundary \eqref{eq:boundary} are defined and recall that
\begin{equation}\label{eq:compactness-semi-commutators}
\Op(\fa)\Op(\fb)\simeq \Op(\fa\fb)
\quad\mbox{whenever}\quad
\fa,\fb\in \cE(\R_+,V(\R)).
\end{equation}
In Section~\ref{sec:Inverse-closedness} we define our main algebra
$\widetilde{\cE}(\R_+,V(\R))\subset\cE(\R_+,V(\R))$ and show that all
algebras $C_b(\R_+,V(\R))$, $SO(\R_+,V(\R))$, $\cE(\R_+,V(\R))$, and
$\widetilde{\cE}(\R_+,V(\R))$ are inverse closed in $C_b(\R_+\times\R)$,
the algebra of all bounded continuous functions on $\R_+\times\R$. Combining
the inverse closedness of the algebras $\cE(\R_+,V(\R))$ (resp.
$\widetilde{\cE}(\R_+,V(\R))$) with \eqref{eq:compactness-semi-commutators},
we get a description of all regularizers for $\Op(\fa)$ with
$\fa\in\cE(\R_+,V(\R))$ (resp. $\widetilde{\cE}(\R_+,V(\R))$)
bounded away from zero on $\R_+\times\R$. In Section~\ref{sec:Fredholmness}
we show that the latter strong hypothesis can be essentially relaxed
in the case of the algebra $\widetilde{\cE}(\R_+,V(\R))$. We show that if
$\fa\in\widetilde{\cE}(\R_+,V(\R))$ does not degenerate on the ``boundary"
\eqref{eq:boundary}, then there exists $\fb\in\widetilde{\cE}(\R_+,V(\R))$
such that $\fb=1/\fa$ on the ``boundary" \eqref{eq:boundary}. This construction
becomes possible for $\fa\in\widetilde{\cE}(\R_+,V(\R))$ because the limiting
values of $\fa(t,\cdot)$ on $\Delta$ are attained uniformly in the norm of $V(\R)$
(see Lemma~\ref{le:values-tilde}). Finally we recall that if
$\fc\in\widetilde{\cE}(\R_+,V(\R))$, then $\Op(\fc)$ is compact if and only
if its symbol $\fc$ degenerates on the  ``boundary" \eqref{eq:boundary}.
Combining this result with our construction, we arrive at the main result
of the paper.
\section{Algebra \boldmath{$C_b(\R_+,V(\R))$} and Boundedness of Mellin PDO's}
\label{sec:Boundedness}
\subsection{Definition of the Algebra  \boldmath{$C_b(\R_+,V(\R))$}}
Let $a$ be an absolutely continuous function of finite total variation
\[
V(a):=\int_\R|a'(x)|dx
\]
on $\R$. The set $V(\R)$ of all absolutely continuous functions of finite
total variation on $\R$ becomes a Banach algebra equipped with the norm
\begin{equation}\label{eq:norm-V}
\|a\|_V:=\|a\|_{L^\infty(\R)}+V(a).
\end{equation}
Following \cite{K06,K06-IWOTA}, let $C_b(\R_+,V(\R))$ denote the Banach
algebra of all bounded continuous $V(\R)$-valued functions on $\R_+$ with
the norm
\[
\|\fa(\cdot,\cdot)\|_{C_b(\R_+,V(\R))}
=
\sup_{t\in\R_+}\|\fa(t,\cdot)\|_V.
\]
\subsection{Boundedness of Mellin PDO's}
As usual, let $C_0^\infty(\R_+)$ be the set of all infinitely differentiable
functions of compact support on $\R_+$.

The following boundedness result for Mellin pseudodifferential operators
can be extracted from \cite[Theorem~6.1]{K06-IWOTA} (see also \cite[Theorem~3.1]{K06}).
\begin{theorem}\label{th:boundedness-PDO}
If $\fa\in C_b(\R_+,V(\R))$, then the Mellin pseudodifferential operator
$\operatorname{Op}(\fa)$, defined for functions $f\in C_0^\infty(\R_+)$ by
the iterated integral
\[
[\operatorname{Op}(\fa)f](t)
=
\frac{1}{2\pi}\int_\R dx \int_{\R_+}
\fa(t,x)\left(\frac{t}{\tau}\right)^{ix}f(\tau) \frac{d\tau}{\tau}
\quad\mbox{for}\quad t\in\R_+,
\]
extends to a bounded linear operator on the space $L^p(\R_+,d\mu)$
and there is a positive constant $C_p$ depending only on $p$ such
that
\[
\|\operatorname{Op}(\fa)\|_{\cB(L^p(\R_+,d\mu))}
\le C_p\|\fa\|_{C_b(\R_+,V(\R))}.
\]
\end{theorem}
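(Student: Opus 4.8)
The plan is to reduce the statement to a Fourier-analytic estimate by an exponential change of variable, and then to derive the boundedness from the $L^p$-boundedness of the Carleson maximal operator.

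First I would introduce the isometric isomorphism $\Phi\colon L^p(\R_+,d\mu)\to L^p(\R)$ given by $(\Phi f)(s)=f(e^s)$; since the substitution $t=e^s$ turns $d\mu(t)=dt/t$ into $ds$, the map $\Phi$ is indeed isometric, and it takes $C_0^\infty(\R_+)$ onto $C_0^\infty(\R)$. Because $(\cM f)(x)=\int_\R f(e^s)e^{-isx}\,ds$ is precisely the Fourier transform $\mathcal F(\Phi f)$, a direct computation shows that, for $f\in C_0^\infty(\R_+)$ and $g=\Phi f$,
\[
\big(\Phi\,\Op(\fa)\,\Phi^{-1}g\big)(s)=\frac{1}{2\pi}\int_\R b(s,x)\,(\mathcal F g)(x)\,e^{isx}\,dx,\qquad b(s,x):=\fa(e^s,x).
\]
Thus $\Phi\,\Op(\fa)\,\Phi^{-1}$ is the Fourier pseudodifferential operator with symbol $b$, where $b\in C_b(\R,V(\R))$ with $\|b\|_{C_b(\R,V(\R))}=\|\fa\|_{C_b(\R_+,V(\R))}$. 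Hence it suffices to show that the Fourier PDO (still denoted $\Op(b)$) with an arbitrary symbol in $C_b(\R,V(\R))$ is bounded on $L^p(\R)$ with norm controlled by $\|b\|_{C_b(\R,V(\R))}$.

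For this I would use that, for each fixed $s$, the function $b(s,\cdot)$ is absolutely continuous with $\|\partial_x b(s,\cdot)\|_{L^1(\R)}=V(b(s,\cdot))\le\|b\|_{C_b(\R,V(\R))}$ and with a limit $b(s,-\infty)$ at $-\infty$. Writing $b(s,x)=b(s,-\infty)+\int_\R(\partial_\xi b)(s,\xi)\,\mathbf 1_{(\xi,+\infty)}(x)\,d\xi$ and using that, in the left quantization, $\Op(c(\cdot)m(\cdot))$ acts by $[\Op(c(\cdot)m(\cdot))u](s)=c(s)\,(m(D)u)(s)$, where $m(D)$ is the Fourier multiplier with symbol $m$, I obtain for $u\in C_0^\infty(\R)$
\[
(\Op(b)u)(s)=b(s,-\infty)\,u(s)+\int_\R(\partial_\xi b)(s,\xi)\,(P_\xi^+u)(s)\,d\xi,
\]
where $P_\xi^+$ is the Fourier multiplier with symbol $\mathbf 1_{(\xi,+\infty)}$; all the integrals converge absolutely and the interchange of integrations is legitimate because $\mathcal F u$ is Schwartz. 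Bounding the inner integral pointwise in $s$ by $V(b(s,\cdot))$ yields
\[
\big|(\Op(b)u)(s)\big|\le\|b\|_{C_b(\R,V(\R))}\,\big(|u(s)|+(\mathcal C u)(s)\big),\qquad (\mathcal C u)(s):=\sup_{\xi\in\R}\big|(P_\xi^+u)(s)\big|.
\]
Since $\mathbf 1_{(\xi,+\infty)}=1-\mathbf 1_{(-\infty,\xi]}$, the operator $\mathcal C$ is dominated by $I$ plus the Carleson maximal operator for the Fourier transform on $\R$, which is bounded on $L^p(\R)$ for every $p\in(1,\infty)$ by the Carleson--Hunt theorem; denoting its norm by $\kappa_p$, we conclude $\|\Op(b)u\|_{L^p(\R)}\le(1+\kappa_p)\|b\|_{C_b(\R,V(\R))}\|u\|_{L^p(\R)}$, and, by density of $C_0^\infty(\R)$ in $L^p(\R)$, the operator $\Op(b)$ extends boundedly to $L^p(\R)$. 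Transporting this back through $\Phi$ gives the asserted bound with $C_p=1+\kappa_p$.

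The step I expect to be the crux is the last one: the naive estimate obtained by pulling $\|\partial_x b(\cdot,\xi)\|_{L^\infty(\R)}$ out of the $d\xi$-integral diverges in general, because the symbol is merely bounded — not slowly oscillating — in the spatial variable; it is precisely the maximal operator $\mathcal C$, i.e.\ the full strength of the Carleson--Hunt theorem, that is needed to absorb the dependence of $b$ on $s$. The remaining ingredients — the exponential change of variable, the integral representation of $V(\R)$-functions through their derivatives, and the Fubini-type interchanges, valid on $u\in C_0^\infty(\R)$ — are routine. This is essentially the argument underlying \cite[Theorem~6.1]{K06-IWOTA} and \cite[Theorem~3.1]{K06}.
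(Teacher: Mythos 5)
Your proof is correct and follows essentially the same route as the paper, which likewise reduces $\Op(\fa)$ to a Fourier PDO via the exponential substitution \eqref{eq:def-E}--\eqref{eq:translation-symbols} and then invokes \cite[Theorem~3.1]{K06} and \cite[Theorem~6.1]{K06-IWOTA}, whose proofs rest on exactly your decomposition of a $V(\R)$-symbol through its $\xi$-derivative and the Carleson--Hunt maximal estimate. The only (harmless) quibble is the bookkeeping of the final constant, which comes out as $2+\kappa_p$ rather than $1+\kappa_p$ if $\kappa_p$ denotes the norm of the Carleson maximal operator itself.
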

\section{Algebra \boldmath{$\cE(\R_+,V(\R))$} and Compactness of Semi-Commutators of Mellin PDO's}
\label{sec:Compactness}
\subsection{Definitions of the Algebras \boldmath{$SO(\R_+,V(\R))$} and \boldmath{$\cE(\R_+,V(\R))$}}
Let $SO(\R_+,V(\R))$ denote the Banach subalgebra of $C_b(\R_+,V(\R))$
consisting of all $V(\R)$-valued functions $\fa$ on $\R_+$ that slowly
oscillate at $0$ and $\infty$, that is,
\[
\lim_{r\to 0} \operatorname{cm}_r^C(\fa)
=
\lim_{r\to \infty} \operatorname{cm}_r^C(\fa)=0,
\]
where
\[
\operatorname{cm}_r^C(\fa)
:=
\max\big\{
\|\fa(t,\cdot)-\fa(\tau,\cdot)\|_{L^\infty(\R)}:t,\tau\in[r,2r]
\big\}.
\]

Let $\cE(\R_+,V(\R))$ be the Banach algebra of all $V(\R)$-valued functions
$\fa\in SO(\R_+,V(\R))$ such that
\begin{equation}\label{eq:definition-cE}
\lim_{|h|\to 0}\sup_{t\in\R_+}\|\fa(t,\cdot)-\fa^h(t,\cdot)\|_V=0
\end{equation}
where $\fa^h(t,x):=\fa(t,x+h)$ for all $(t,x)\in\R_+\times \R$.
\subsection{Limiting Values of Functions in the Algebra \boldmath{$\cE(\R_+,V(\R))$}}
\label{sec:values}
Let $\fa\in\cE(\R_+,V(\R))$. For every $t\in\R_+$, the function $\fa(t,\cdot)$
belongs to $V(\R)$ and, therefore, has finite limits at $\pm\infty$, which will
be denoted by $\fa(t,\pm\infty)$. Now we explain how to extend the function
$\fa$ to $\Delta\times\overline{\R}$. By analogy with \cite[Lemma~2.7]{K06}
one can prove the following.
\begin{lemma}\label{le:values}
Let $s\in\{0,\infty\}$ and $\{\fa_k\}_{k=1}^\infty$ be a countable subset of the
algebra $\cE(\R_+,V(\R))$. For each $\xi\in M_s(SO(\R_+))$ there is a sequence
$\{t_j\}_{j\in\N}\subset\R_+$ and functions
$\fa_k(\xi,\cdot)\in V(\R)$ such that $t_j\to s$ as $j\to\infty$ and
\[
\fa_k(\xi,x)=\lim_{j\to\infty}\fa_k(t_j,x)
\]
for every $x\in\overline{\R}$ and every $k\in\N$.
\end{lemma}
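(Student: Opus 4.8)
The plan is to mimic the construction in \cite[Lemma~2.7]{K06} used to extend Fourier PDO symbols to fibers over $\pm\infty$, now transported to the Mellin setting and to the fibers over $s\in\{0,\infty\}$. First I would recall that a point $\xi\in M_s(SO(\R_+))$ is, by the description $M(SO(\R_+))=\Delta\cup\R_+$ recalled in the introduction, an element of the weak-star closure of $\R_+$ in the dual of $SO(\R_+)$; hence there is a net $(t_\lambda)$ in $\R_+$ with $t_\lambda\to s$ such that $g(t_\lambda)\to\xi(g)$ for every $g\in SO(\R_+)$. The issue is to replace this net by a single sequence $(t_j)$ that simultaneously works for all the countably many functions $\fa_k$ and, more importantly, for each value $\fa_k(\cdot,x)$ at every real $x$. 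The key observation is that for fixed $\fa_k\in\cE(\R_+,V(\R))\subset SO(\R_+,V(\R))$ and fixed $x$, the scalar function $t\mapsto\fa_k(t,x)$ lies in $SO(\R_+)$ (the pointwise evaluation at $x$ is a contractive homomorphism $V(\R)\to\C$, and it is straightforward from the definition of $\operatorname{cm}_r^C$ that composing with it preserves slow oscillation). So each $\fa_k(\cdot,x)$ extends continuously to $M(SO(\R_+))$ and in particular $\xi(\fa_k(\cdot,x))$ is well defined; call it $\fa_k(\xi,x)$.

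Next I would handle the uncountability of $x\in\overline\R$ by a density argument. Because $\fa_k(t,\cdot)\in V(\R)$ with $\|\fa_k(t,\cdot)\|_V$ uniformly bounded, the map $x\mapsto\fa_k(t,x)$ is uniformly continuous on $\R$ with a modulus of continuity controlled by the uniform bound on the total variations and, via \eqref{eq:definition-cE}, uniformly in $t$; moreover each $\fa_k(t,\cdot)$ has limits at $\pm\infty$. Consequently it suffices to realise the limits $\fa_k(\xi,x)=\lim_j\fa_k(t_j,x)$ along a countable dense set of $x$'s together with $x=\pm\infty$, and then the uniform equicontinuity in $x$ (uniform in $t$) upgrades pointwise convergence on a dense set to pointwise convergence at every $x\in\overline\R$. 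So the task reduces to a countable one: find $(t_j)\subset\R_+$ with $t_j\to s$ such that $\fa_k(t_j,x)\to\fa_k(\xi,x)$ for all $k\in\N$ and all $x$ in a fixed countable dense subset $D\subset\overline\R$.

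To produce such a sequence I would use that $\xi$ lies in the weak-star closure of $\R_+$: enumerate the countable family of functions $\{\fa_k(\cdot,x):k\in\N,\ x\in D\}\subset SO(\R_+)$ as $(g_m)_{m\in\N}$. For each $n$, the set of $t\in\R_+$ with $|g_m(t)-\xi(g_m)|<1/n$ for $m=1,\dots,n$ is, by density of $\R_+$-evaluations in the weak-star topology, nonempty and — because each $g_m\in SO(\R_+)$ actually extends continuously to the compact space $M(SO(\R_+))$ and $\xi\in M_s(SO(\R_+))$ forces accumulation at $s$ — we may pick a point $t_n$ in it that also satisfies $t_n<1/n$ (if $s=0$) or $t_n>n$ (if $s=\infty$). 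A standard diagonal extraction then yields $t_j\to s$ with $g_m(t_j)\to\xi(g_m)$ for every $m$, i.e. $\fa_k(t_j,x)\to\fa_k(\xi,x)$ for all $k$ and all $x\in D$; combined with the equicontinuity argument of the previous paragraph this gives convergence for every $x\in\overline\R$, and the functions $\fa_k(\xi,\cdot)$ lie in $V(\R)$ because a pointwise limit of $V(\R)$-functions with uniformly bounded $V$-norm is again in $V(\R)$ with $V(\fa_k(\xi,\cdot))\le\liminf_j V(\fa_k(t_j,\cdot))$ (lower semicontinuity of total variation). The main obstacle is precisely the bookkeeping that makes one sequence serve all of $k\in\N$ and all $x\in\overline\R$ at once: one must be careful that the ``extends continuously to $M(SO(\R_+))$'' step is legitimate for each scalar slice $\fa_k(\cdot,x)$ and that the equicontinuity in $x$ is genuinely uniform in $t$ (this is exactly where membership in $\cE(\R_+,V(\R))$, rather than merely $SO(\R_+,V(\R))$, is used, via \eqref{eq:definition-cE}), so that no diagonalization over the uncountable parameter $x$ is needed.
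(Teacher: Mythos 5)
Your overall scheme---evaluate $\xi$ on the scalar slices $\fa_k(\cdot,x)\in SO(\R_+)$, realize $\xi$ by points of $\R_+$ accumulating at $s$ via the weak-star density of $\R_+$ in $\Delta$ from \cite[Proposition~2.1]{K09}, reduce the uncountable parameter $x\in\overline{\R}$ to a countable dense set $D$ (containing $\pm\infty$) by the uniform-in-$t$ equicontinuity supplied by \eqref{eq:definition-cE}, and then choose one sequence $\{t_j\}$ serving the resulting countable family of functions in $SO(\R_+)$---is sound and is essentially the route of \cite[Lemma~2.7]{K06}, to which the paper delegates this proof.

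The one step that fails as you justify it is the last one: the principle ``a pointwise limit of $V(\R)$-functions with uniformly bounded $V$-norm is again in $V(\R)$'' is false. Lower semicontinuity of the total variation under pointwise convergence only gives that the limit has bounded variation; it does not give absolute continuity, which is part of the definition of $V(\R)$ here (the formula $V(a)=\int_\R|a'(x)|\,dx$ presupposes $a'\in L^1(\R)$). The Cantor function is a uniform limit of absolutely continuous functions whose total variations are all equal to $1$, yet it is not absolutely continuous. This is precisely the point where membership of $\fa_k$ in $\cE(\R_+,V(\R))$, rather than merely in $SO(\R_+,V(\R))$, must be invoked a second time---now through the full $V$-norm in \eqref{eq:definition-cE} and not only its $L^\infty$ part. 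A correct argument runs as follows: the derivatives $\partial_x\fa_k(t_j,\cdot)$ form a bounded subset of $L^1(\R)$ whose translations are equicontinuous in $L^1(\R)$ uniformly in $j$ (this is the $V(\cdot)$ contribution to \eqref{eq:definition-cE}); by the Fr\'echet--Kolmogorov criterion applied on the intervals $[-N,N]$ together with a diagonal extraction, a subsequence converges in $L^1_{\mathrm{loc}}(\R)$ to some $g_k$, and $g_k\in L^1(\R)$ by Fatou's lemma. Passing to the limit in $\fa_k(t_j,x)-\fa_k(t_j,0)=\int_0^x\partial_y\fa_k(t_j,y)\,dy$ along that subsequence yields $\fa_k(\xi,x)=\fa_k(\xi,0)+\int_0^x g_k(y)\,dy$ for all $x\in\R$, so $\fa_k(\xi,\cdot)$ is absolutely continuous with $V(\fa_k(\xi,\cdot))\le\|g_k\|_{L^1(\R)}<\infty$, i.e.\ it lies in $V(\R)$. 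With this repair your proof is complete; the remaining minor looseness (forcing $t_n\to s$ by adjoining to each finite defining family a function of $C(\overline{\R}_+)$ vanishing only at $s$) is easily made precise.
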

The following lemma will be of some importance in applications we have in mind
\cite{KKL-preparation} (although it will not be used in the current paper).
\begin{lemma}\label{le:values-series}
Let $\{\fa_n\}_{n\in\mathbb{N}}$ be a sequence of functions in $\cE(\R_+,V(\R))$
such that the series $\sum_{n=1}^\infty\fa_n$ converges in the norm of $C_b(\R_+,V(\R))$
to a function $\fa\in\cE(\R_+,V(\R))$. Then
\begin{equation}\label{eq:values-series-1}
\fa(t,\pm\infty) =\sum_{n=1}^\infty \fa_n(t,\pm\infty)
\ \mbox{ for all }\
t\in\R_+,
\quad
\fa(\xi,x) =\sum_{n=1}^\infty \fa_n(\xi,x)
\ \mbox{ for all }\
(\xi,x)\in\Delta\times\R.
\end{equation}
\end{lemma}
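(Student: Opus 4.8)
The plan is to deduce Lemma~\ref{le:values-series} from Lemma~\ref{le:values} together with the completeness of $V(\R)$ and the obvious fact that evaluation-type functionals are continuous on $V(\R)$. First I would fix the point at which we wish to evaluate. For the values at $\pm\infty$ this is easy: for each fixed $t\in\R_+$ the map $b\mapsto b(\pm\infty)=\lim_{x\to\pm\infty}b(x)$ is a bounded linear functional on $V(\R)$, since $|b(\pm\infty)|\le\|b\|_{L^\infty(\R)}\le\|b\|_V$. Because $\sum_{n=1}^\infty\fa_n$ converges in $C_b(\R_+,V(\R))$, the partial sums $S_N(t,\cdot):=\sum_{n=1}^N\fa_n(t,\cdot)$ converge to $\fa(t,\cdot)$ in $V(\R)$ for every fixed $t$; applying the functional $b\mapsto b(\pm\infty)$ and using its continuity gives $\fa(t,\pm\infty)=\lim_N S_N(t,\pm\infty)=\sum_{n=1}^\infty\fa_n(t,\pm\infty)$, which is the first identity in \eqref{eq:values-series-1}.

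The second identity, for $(\xi,x)\in\Delta\times\R$, is where the slightly more delicate argument with Lemma~\ref{le:values} enters, because the value $\fa(\xi,x)$ is not a priori given by a single bounded functional on $V(\R)$ but is defined through a limit along a sequence $t_j\to s$ (where $s\in\{0,\infty\}$ is the point over which $\xi$ lies). The key point is that Lemma~\ref{le:values} can be applied to a countable family all at once. So I would apply it to the countable set $\{\fa\}\cup\{\fa_n\}_{n\in\N}\subset\cE(\R_+,V(\R))$ (note that $\fa$ itself lies in $\cE(\R_+,V(\R))$ by hypothesis, and that the partial sums $S_N$ also lie in this algebra, so one could equally take $\{\fa\}\cup\{S_N\}_{N\in\N}\cup\{\fa_n\}_{n\in\N}$ if convenient): for the given $\xi\in M_s(SO(\R_+))$ there is a single sequence $t_j\to s$ such that simultaneously $\fa(t_j,x)\to\fa(\xi,x)$ and $\fa_n(t_j,x)\to\fa_n(\xi,x)$ for every $x\in\overline{\R}$ and every $n\in\N$ (and, if we added the $S_N$, also $S_N(t_j,x)\to S_N(\xi,x)$). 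Now fix $x\in\R$ and $N\in\N$; taking $j\to\infty$ in the finite-sum identity $S_N(t_j,x)=\sum_{n=1}^N\fa_n(t_j,x)$ gives $\sum_{n=1}^N\fa_n(\xi,x)=\lim_{j\to\infty}S_N(t_j,x)$.

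It remains to let $N\to\infty$. For this I would estimate $|\fa(\xi,x)-\sum_{n=1}^N\fa_n(\xi,x)|$ by inserting the sequence $t_j$: for each $j$,
\[
\Big|\fa(\xi,x)-\sum_{n=1}^N\fa_n(\xi,x)\Big|
\le
\big|\fa(\xi,x)-\fa(t_j,x)\big|
+\big|\fa(t_j,x)-S_N(t_j,x)\big|
+\Big|S_N(t_j,x)-\sum_{n=1}^N\fa_n(\xi,x)\Big|.
\]
The middle term is bounded by $\sup_{t\in\R_+}\|\fa(t,\cdot)-S_N(t,\cdot)\|_{L^\infty(\R)}\le\|\fa-S_N\|_{C_b(\R_+,V(\R))}$, which tends to $0$ as $N\to\infty$ uniformly in $j$; the first and third terms tend to $0$ as $j\to\infty$ for each fixed $N$ by the choice of $\{t_j\}$ from Lemma~\ref{le:values}. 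Hence, given $\eps>0$, first choose $N$ large so that the middle term is below $\eps$ for all $j$, then choose $j$ large so that the other two terms are below $\eps$; this shows $\fa(\xi,x)=\sum_{n=1}^\infty\fa_n(\xi,x)$. I do not expect any serious obstacle here; the only thing to be careful about is the order of quantifiers, namely to invoke Lemma~\ref{le:values} for the whole countable family (including $\fa$ and the partial sums) simultaneously so that a single sequence $t_j$ works for all the limits at once, and to keep the convergence of the tail $\fa-S_N$ uniform in $t$ (which it is, since the series converges in $C_b(\R_+,V(\R))$) so that it survives passing to a limit along $t_j$.
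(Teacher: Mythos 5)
Your proposal is correct and follows essentially the same route as the paper: a uniform sup-norm bound on the tail $\fa-\fs_N$ combined with a three-term triangle inequality, and, for the fiber values, a single application of Lemma~\ref{le:values} to the whole countable family so that one sequence $t_j\to s$ realizes all the limits simultaneously. Your treatment of the values at $\pm\infty$ via continuity of the evaluation functional $b\mapsto b(\pm\infty)$ on $V(\R)$ is just a more compact phrasing of the paper's $\eps/3$ argument, and your explicit remark that $\fs_N(\xi,x)=\sum_{n=1}^N\fa_n(\xi,x)$ must be secured by including the $\fa_n$ in the countable family is a point the paper leaves implicit.
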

\begin{proof}
Fix  $\eps>0$. For $N\in\N$, put
\[
\fs_N:=\sum_{n=1}^N \fa_n.
\]
By the hypothesis, there exists $N_0\in\N$ such that for all $N>N_0$,
\begin{equation}\label{eq:values-series-3}
\sup_{(t,x)\in\R_+\times\R}|\fa(t,x)-\fs_N(t,x)|
\le
\|\fa-\fs_N\|_{C_b(\R_+,V(\R))}
<\eps/3.
\end{equation}
Fix some $t\in\R_+$. For every $N>N_0$ there exists $x(t,N)\in\R_+$ such that
for all $x\in (x(t,N),+\infty)$,
\begin{equation}\label{eq:values-series-4}
|\fa(t,+\infty)-\fa(t,x)|<\eps/3,
\quad
|\fs_N(t,+\infty)-\fs_N(t,x)|<\eps/3.
\end{equation}
From \eqref{eq:values-series-3} and \eqref{eq:values-series-4} it follows that
for every $N>N_0$ and $x\in (x(t,N),+\infty)$,
\[
|\fa(t,+\infty)-\fs_N(t,+\infty)|
\le
|\fa(t,+\infty)-\fa(t,x)|
+
|\fa(t,x)-\fs_N(t,x)|
+
|\fs_N(t,x)-\fs_N(t,+\infty)|
<\eps.
\]
This implies the first equality in \eqref{eq:values-series-1} for the sign ``$+$".
The proof for the sign ``$-$" is analogous.

Fix $s\in\{0,\infty\}$ and $\xi\in M_s(SO(\R_+))$. In view of Lemma~\ref{le:values},
there exists a sequence $\{t_j\}_{j\in\N}\subset\R_+$ such that
$t_j\to s$ as $j\to\infty$ and functions $\fa(\xi,\cdot)\in V(\R_+)$ and
$\fs_N(\xi,\cdot)\in V(\R_+)$, $N\in\N$, such that
\[
\fa(\xi,x)=\lim_{j\to\infty}\fa(t_j,x),
\quad
\fs_N(\xi,x)=\lim_{j\to\infty}\fs_N(t_j,x)
\]
for all $x\in\overline{\R}$ and all $N\in\N$.

Fix $x\in\R$. For every $N>N_0$ there exists $j_0(x,N)\in\N$ such that
for $j>j_0(x,N)$,
\begin{equation}\label{eq:values-series-5}
|\fa(\xi,x)-\fa(t_j,x)|<\eps/3,
\quad
|\fs_N(\xi,x)-\fs_N(t_j,x)|<\eps/3.
\end{equation}
From \eqref{eq:values-series-3} and \eqref{eq:values-series-5}
we obtain that for $N>N_0$ and $j>j_0(x,N)$,
\[
\left|\fa(\xi,x)-\fs_N(\xi,x)\right|
\le
|\fa(\xi,x)-\fa(t_j,x)|
+
|\fa(t_j,x)-\fs_N(t_j,x)|+|\fs_N(t_j,x)-\fs_N(\xi,x)|
<
\eps,
\]
which concludes the proof of the second equality in \eqref{eq:values-series-1}.
\end{proof}
\subsection{Compactness of Semi-Commutators of Mellin PDO's}
Let $E$ be the isometric isomorphism
\begin{equation}\label{eq:def-E}
E:L^p(\R_+,d\mu)\to L^p(\R),
\quad
(Ef)(x):=f(e^x),\quad x\in\R.
\end{equation}
Applying the relation
\begin{equation}\label{eq:translation-PDO}
\Op(\fa)=E^{-1}a(x,D)E
\end{equation}
between the Mellin pseudodifferential operator $\Op(\fa)$ and the Fourier
pseudodifferential operator $a(x,D)$ considered in \cite{K06}, where
\begin{equation}\label{eq:translation-symbols}
\fa(t,x)=a(\ln t,x),\quad(t,x)\in\R_+\times\R,
\end{equation}
we infer from \cite[Theorem~8.3]{K06} the following compactness result.
\begin{theorem}\label{th:comp-semi-commutators-PDO}
If $\fa,\fb\in\cE(\R_+,V(\R))$, then
$\operatorname{Op}(\fa)\operatorname{Op}(\fb)\simeq \operatorname{Op}(\fa\fb).$
\end{theorem}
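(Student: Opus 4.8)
The plan is to deduce Theorem~\ref{th:comp-semi-commutators-PDO} directly from its Fourier-side counterpart \cite[Theorem~8.3]{K06} via the unitary equivalence \eqref{eq:translation-PDO}. First I would recall that the isometric isomorphism $E$ of \eqref{eq:def-E} conjugates the Mellin PDO $\Op(\fa)$ acting on $L^p(\R_+,d\mu)$ into the Fourier PDO $a(x,D)$ acting on $L^p(\R)$, where the symbol $a$ is obtained from $\fa$ by the change of variables \eqref{eq:translation-symbols}, namely $a(y,x)=\fa(e^y,x)$. Since $E$ is an isometric isomorphism, it carries $\cK(L^p(\R_+,d\mu))$ onto $\cK(L^p(\R))$; hence a semi-commutator $\Op(\fa)\Op(\fb)-\Op(\fa\fb)$ is compact on $L^p(\R_+,d\mu)$ if and only if the corresponding operator $a(x,D)b(x,D)-(ab)(x,D)$ is compact on $L^p(\R)$.

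The second step is to verify that the change of variables $y\mapsto e^y$ maps the algebra $\cE(\R_+,V(\R))$ into the algebra of slowly oscillating $V(\R)$-valued symbols on $\R$ for which \cite[Theorem~8.3]{K06} was proved. The condition \eqref{eq:definition-cE} on the $x$-variable is literally preserved, since the substitution only affects the first variable and $\fa(t,\cdot)-\fa^h(t,\cdot)$ becomes $a(y,\cdot)-a^h(y,\cdot)$ with the same supremum over the two (bijectively identified) parameter ranges. For the slow-oscillation condition in the first variable, one checks that $\operatorname{cm}_r^C(\fa)\to 0$ as $r\to 0$ and $r\to\infty$ translates, under $t=e^y$, into the statement that $y\mapsto a(y,\cdot)$ is slowly oscillating at $\pm\infty$ in Sarason's sense on $\R$; this is the standard dictionary between slow oscillation on $\R_+$ at $\{0,\infty\}$ and slow oscillation on $\R$ at $\{-\infty,+\infty\}$, and it is exactly the class of symbols treated in \cite{K06}. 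Likewise $\fa\fb$ corresponds to $ab$ under the same substitution, so $\Op(\fa\fb)=E^{-1}(ab)(x,D)E$.

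Putting these together: by \eqref{eq:translation-PDO} we have
\[
\Op(\fa)\Op(\fb)-\Op(\fa\fb)
=
E^{-1}\big(a(x,D)b(x,D)-(ab)(x,D)\big)E,
\]
and \cite[Theorem~8.3]{K06} asserts precisely that $a(x,D)b(x,D)-(ab)(x,D)\in\cK(L^p(\R))$ for symbols $a,b$ in the relevant slowly oscillating $V(\R)$-valued class. Conjugating back by the isometric isomorphism $E$ yields $\Op(\fa)\Op(\fb)-\Op(\fa\fb)\in\cK(L^p(\R_+,d\mu))$, i.e.\ $\Op(\fa)\Op(\fb)\simeq\Op(\fa\fb)$, which is the assertion.

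The routine part is the bookkeeping of \eqref{eq:translation-PDO}--\eqref{eq:translation-symbols} and the compatibility of $E$ with the ideal of compact operators. The only point that requires genuine care — and which I expect to be the main obstacle — is confirming that the symbol class used in \cite[Theorem~8.3]{K06} matches, under the exponential change of variable, the algebra $\cE(\R_+,V(\R))$ as defined here: one must make sure that both the $V(\R)$-continuity in $x$ (the condition \eqref{eq:definition-cE}) and the slow oscillation in the first variable are the hypotheses actually imposed in \cite{K06}, rather than something slightly stronger or weaker. Once this identification of hypotheses is pinned down, the proof is a one-line transfer of the Fourier result through $E$.
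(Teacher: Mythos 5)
Your proposal is exactly the paper's argument: the authors state Theorem~\ref{th:comp-semi-commutators-PDO} as a direct consequence of \cite[Theorem~8.3]{K06} via the conjugation \eqref{eq:translation-PDO} with the symbol substitution \eqref{eq:translation-symbols}, which is precisely the transfer you describe. Your additional care in checking that the exponential change of variable identifies $\cE(\R_+,V(\R))$ with the slowly oscillating $V(\R)$-valued symbol class of \cite{K06} is the right (and only) point needing verification, and it goes through as you expect.
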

\section{Regularization of Mellin PDO's with Symbols\\ Globally Bounded Away from Zero}
\label{sec:Inverse-closedness}
\subsection{Definition of the Algebra \boldmath{$\widetilde{\cE}(\R_+,V(\R))$}}
We denote by $\widetilde{\cE}(\R_+,V(\R))$ the Banach algebra
consisting of all functions $\fa\in\cE(\R_+,V(\R))$ that satisfy the condition
\begin{equation}\label{eq:definition-tilde-cE}
\lim_{m\to\infty}\sup_{t\in\R_+}\int_{\R\setminus[-m,m]}
|\partial_x\fa(t,x)|\,dx=0.
\end{equation}
This algebra plays a crucial role in the paper.

\subsection{Inverse Closedness of the Algebras \boldmath{$C_b(\R_+,V(\R)$},
\boldmath{$SO(\R_+,V(\R))$},\\ \boldmath{$\cE(\R_+,V(\R))$}, and
\boldmath{$\widetilde{\cE}(\R_+,V(\R))$} in the Algebra \boldmath{$C_b(\R_+\times\R)$}}
Let $\fB$ be a unital Banach algebra and $\fA$ be a subalgebra of $\fB$, which
contains the identity element of $\fB$. The algebra $\fA$ is said to be inverse
closed in the algebra $\fB$ if every element $a\in\fA$, invertible in $\fB$,
is invertible in $\fA$ as well.
\begin{lemma}\label{le:inverse-closedness}
The algebras $C_b(\R_+,V(\R))$, $SO(\R_+,V(\R))$, $\cE(\R_+,V(\R))$, and
$\widetilde{\cE}(\R_+,V(\R))$ are inverse  closed in the Banach algebra
$C_b(\R_+\times\R)$ of all bounded continuous functions on the half-plane
$\R_+\times\R$.
\end{lemma}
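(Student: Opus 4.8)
The plan is to prove all four inverse-closedness statements at once by treating $C_b(\R_+\times\R)$, which is clearly inverse closed in itself, and then descending through the chain
\[
\widetilde{\cE}(\R_+,V(\R))\subset\cE(\R_+,V(\R))\subset SO(\R_+,V(\R))\subset C_b(\R_+,V(\R))\subset C_b(\R_+\times\R).
\]
So the key point is: if $\fa$ lies in one of these algebras and $1/\fa\in C_b(\R_+\times\R)$ (equivalently, $\inf_{(t,x)\in\R_+\times\R}|\fa(t,x)|>0$), then $1/\fa$ lies in the same algebra. Since each algebra in the chain is a subalgebra containing the unit $\mathbf 1$, it suffices to check membership of $\fb:=1/\fa$ in each algebra, using that we already know $\fb\in C_b(\R_+\times\R)$ and, inductively, that $\fb$ lies in the next larger algebra of the chain.

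First I would handle $C_b(\R_+,V(\R))$: fix $t\in\R_+$ and observe $\fa(t,\cdot)\in V(\R)$ is bounded away from zero uniformly in $t$, say $|\fa(t,x)|\ge\delta>0$; since $V(\R)$ is a Banach algebra and is inverse closed in $C_b(\R)$ (a standard fact for $V(\R)$, following from $(1/a)'=-a'/a^2$ so that $V(1/a)\le V(a)/\delta^2$), we get $\fb(t,\cdot)=1/\fa(t,\cdot)\in V(\R)$ with $\|\fb(t,\cdot)\|_V\le 1/\delta+V(\fa(t,\cdot))/\delta^2\le 1/\delta+\|\fa\|_{C_b(\R_+,V(\R))}/\delta^2$, uniformly in $t$; continuity of $t\mapsto\fb(t,\cdot)$ in $V(\R)$ follows from continuity of $t\mapsto\fa(t,\cdot)$ together with the Lipschitz estimate $\|1/a-1/c\|_V\le C(\delta,\|\fa\|)\|a-c\|_V$ valid for $\|a\|_V,\|c\|_V$ bounded and $|a|,|c|\ge\delta$. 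For $SO(\R_+,V(\R))$ I would estimate $\operatorname{cm}_r^C(\fb)$: since $|\fb(t,x)-\fb(\tau,x)|=|\fa(\tau,x)-\fa(t,x)|/(|\fa(t,x)||\fa(\tau,x)|)\le\delta^{-2}|\fa(t,x)-\fa(\tau,x)|$, we get $\operatorname{cm}_r^C(\fb)\le\delta^{-2}\operatorname{cm}_r^C(\fa)\to 0$ as $r\to 0,\infty$.

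For $\cE(\R_+,V(\R))$, the extra condition \eqref{eq:definition-cE} must be propagated; writing $\fb-\fb^h=\fa^{-1}(\fa^h-\fa)(\fa^h)^{-1}=\fb(\fa^h-\fa)\fb^h$ and using that $V(\R)$ is a Banach algebra together with the uniform bound $\|\fb(t,\cdot)\|_V,\|\fb^h(t,\cdot)\|_V\le M$ just established, I get $\sup_t\|\fb(t,\cdot)-\fb^h(t,\cdot)\|_V\le M^2\sup_t\|\fa(t,\cdot)-\fa^h(t,\cdot)\|_V\to 0$ as $|h|\to 0$. Finally, for $\widetilde{\cE}(\R_+,V(\R))$ I must control the tail integral in \eqref{eq:definition-tilde-cE}: from $\partial_x\fb=-\partial_x\fa/\fa^2$ we get $|\partial_x\fb(t,x)|\le\delta^{-2}|\partial_x\fa(t,x)|$, so $\sup_t\int_{\R\setminus[-m,m]}|\partial_x\fb(t,x)|\,dx\le\delta^{-2}\sup_t\int_{\R\setminus[-m,m]}|\partial_x\fa(t,x)|\,dx\to 0$ as $m\to\infty$. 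The main obstacle — really the only nonroutine ingredient — is the inverse closedness of the scalar algebra $V(\R)$ in $C_b(\R)$ (equivalently, a clean Lipschitz-type estimate for $a\mapsto 1/a$ in the $\|\cdot\|_V$-norm on sets bounded away from zero); once that is in hand, everything else reduces to the pointwise identities $1/a-1/c=(c-a)/(ac)$ and $(1/a)'=-a'/a^2$ combined with the Banach-algebra property of $V(\R)$ and the uniform lower bound $\delta$, so all four cases follow in parallel. I would also remark that since each algebra contains $\mathbf 1$, invertibility in $C_b(\R_+\times\R)$ is exactly the condition $\inf|\fa|>0$, which is what the above estimates use.
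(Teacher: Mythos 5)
Your proposal is correct and follows essentially the same route as the paper: the pointwise identities $(1/\fa)'=-\fa'/\fa^2$ and $1/\fa-1/\fc=(\fc-\fa)/(\fa\fc)$ combined with the Banach-algebra property of $V(\R)$ and the uniform lower bound $\delta=\inf|\fa|$ (so that $\delta^{-2}$ is exactly the paper's constant $\|\fa^{-1}\|_{C_b(\R_+\times\R)}^2$), applied in turn to the $V$-norm, to $\operatorname{cm}_r^C$, to the translation condition \eqref{eq:definition-cE}, and to the tail integral \eqref{eq:definition-tilde-cE}. No gaps worth noting.
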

\begin{proof}
The proof is developed by analogy with \cite[pp.~755--756]{K06}.
Let $\fa\in C_b(\R_+,V(\R))$ be invertible in $C_b(\R_+\times\R)$. Then
\[
\|\fa^{-1}\|_{C_b(\R_+\times\R)}
=
\sup_{(t,x)\in\R_+\times\R}|\fa^{-1}(t,x)|
=
\left(\inf_{(t,x)\in\R_+\times\R}|\fa(t,x)|\right)^{-1}<\infty.
\]
Therefore, for every $t\in\R_+$,
\begin{align}
\|\fa^{-1}(t,\cdot)\|_V
&=
\|\fa^{-1}(t,\cdot)\|_{L^\infty(\R_+)}+V(\fa^{-1}(t,\cdot))
=
\sup_{x\in\R}\left|\frac{\fa(t,x)}{\fa^2(t,x)}\right|
+
\int_\R\left|\frac{\partial_x\fa(t,x)}{\fa^2(t,x)}\right|dx
\nonumber\\
&\le
\|\fa^{-1}\|_{C_b(\R_+\times\R)}^2
\left(
\|\fa(t,\cdot)\|_{L^\infty(\R)}
+
V(\fa(t,\cdot))
\right)
=
\|\fa^{-1}\|_{C_b(\R_+\times\R)}^2
\|\fa(t,\cdot)\|_V.
\label{eq:IC-1}
\end{align}
Hence
\begin{equation}\label{eq:IC-2}
\|\fa^{-1}(\cdot,\cdot)\|_{C_b(\R_+,V(\R))}
\le
\|\fa^{-1}\|_{C_b(\R_+\times\R)}^2
\|\fa(\cdot,\cdot)\|_{C_b(\R_+,V(\R))}
\end{equation}
and for every $t,\tau\in\R_+$,
\begin{align}
\|\fa^{-1}(t,\cdot)-\fa^{-1}(\tau,\cdot)\|_V
&\le
\|\fa^{-1}(t,\cdot)\|_V\|\fa^{-1}(\tau,\cdot)\|_V
\|\fa(t,\cdot)-\fa(\tau,\cdot)\|_V
\notag\\
&\le
\|\fa^{-1}\|_{C_b(\R_+\times\R)}^4\|\fa(\cdot,\cdot)\|_{C_b(\R_+,V(\R))}
\|\fa(t,\cdot)-\fa(\tau,\cdot)\|_V.
\label{eq:IC-3}
\end{align}
From inequalities \eqref{eq:IC-2}--\eqref{eq:IC-3} it follows that the function
$\fa^{-1}$ is a bounded and continuous $V(\R)$-valued function.
Thus, $C_b(\R_+,V(\R))$ is inverse closed in $C_b(\R_+\times\R)$.

Suppose $\fa\in SO(\R_+,V(\R))$ is invertible in $C_b(\R_+\times\R)$. If
$t,\tau\in\R_+$, then
\begin{equation}\label{eq:IC-4}
\|\fa^{-1}(t,\cdot)-\fa^{-1}(\tau,\cdot)\|_{L^\infty(\R)}
\le
\|\fa^{-1}\|_{C_b(\R_+\times\R)}^2
\|\fa(t,\cdot)-\fa(\tau,\cdot)\|_{L^\infty(\R)}.
\end{equation}
Therefore
\[
\operatorname{cm}_r^C(\fa^{-1})
\le
\|\fa^{-1}\|_{C_b(\R_+\times\R)}^2\operatorname{cm}_r^C(\fa),
\quad r\in\R_+.
\]
From the above inequality we conclude that $\fa^{-1}\in SO(\R_+,V(\R))$.
Thus, $SO(\R_+,V(\R))$ is inverse closed in $C_b(\R_+\times\R)$.

Let $\fa\in \cE(\R_+,V(\R))$ be invertible in $C_b(\R_+\times\R)$.
Taking into account inequality \eqref{eq:IC-1} and that the norm in $V(\R)$
is translation-invariant, we get for $h\in\R$ and $t\in\R_+$,
\begin{align}
\|\fa^{-1}(t,\cdot) -(\fa^{-1})^h(t,\cdot)\|_V
&\le
\|\fa^{-1}(t,\cdot)\|_V
\|(\fa^{-1})^h(t,\cdot)\|_V
\|\fa(t,\cdot)-(\fa)^h(t,\cdot)\|_V
\notag\\
&\le
\|\fa^{-1}\|_{C_b(\R_+\times\R)}^4
\|\fa(\cdot,\cdot)\|_{C_b(\R_+,V(\R))}^2
\|\fa(t,\cdot)-\fa^h(t,\cdot)\|_V.
\label{eq:IC-5}
\end{align}
From the above inequality and $\fa\in\cE(\R_+,V(\R))$ it follows that
\begin{align*}
\lim_{|h|\to 0}\sup_{t\in\R_+}
\|\fa^{-1}(t,\cdot)-(\fa^{-1})^h(t,\cdot)\|_V=0.
\end{align*}
This means that $\fa^{-1}\in\cE(\R_+,V(\R))$, whence the proof of the
inverse closedness of the algebra $\cE(\R_+,V(\R))$ in the algebra
$C_b(\R_+\times\R)$ is completed.

Finally, if $\fa\in\widetilde{\cE}(\R_+,V(\R))$ is invertible in
$C_b(\R_+\times\R)$, then
\[
\lim_{m\to\infty}\sup_{t\in\R_+}\int_{\R\setminus[-m,m]}
|\partial_x\fa^{-1}(t,x)|\,dx
\le
\|\fa^{-1}\|_{C_b(\R_+\times\R)}^2
\lim_{m\to\infty}\sup_{t\in\R_+}\int_{\R\setminus[-m,m]}
|\partial_x\fa(t,x)|\,dx
=0.
\]
Therefore, $\fa^{-1}\in\widetilde{\cE}(\R_+,V(\R))$ and thus the algebra
$\widetilde{\cE}(\R_+,V(\R))$ is inverse closed in the algebra $C_b(\R_+,V(\R))$.
\end{proof}
\subsection{First Result on the Regularization of Mellin PDO's}
\begin{lemma}
If $\fa\in\cE(\R_+,V(\R))$ (resp. $\fa\in\widetilde{\cE}(\R_+,V(\R))$ is such that
\begin{equation}\label{eq:strong-assumption}
\inf_{(t,x)\in\R_+\times\R}|\fa(t,x)|>0,
\end{equation}
then the Mellin pseudodifferential operator $\Op(\fa)$ is Fredholm on the space
$L^p(\R_+,d\mu)$ and each its regularizer is of the form $\Op(1/\fa)+K$ where
$K$ is a compact operator on the space $L^p(\R_+,d\mu)$ and $1/\fa\in\cE(\R_+,V(\R))$
(resp. $1/\fa\in\widetilde{\cE}(\R_+,V(\R))$).
\end{lemma}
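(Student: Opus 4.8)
The plan is to derive this statement almost entirely from the two structural results already established in the excerpt: the inverse closedness (Lemma~\ref{le:inverse-closedness}) and the compactness of semi-commutators (Theorem~\ref{th:comp-semi-commutators-PDO}). First I would observe that condition \eqref{eq:strong-assumption} says precisely that $\fa$ is invertible in the Banach algebra $C_b(\R_+\times\R)$, with $\fa^{-1}(t,x)=1/\fa(t,x)$ bounded. Since $\fa$ lies in $\cE(\R_+,V(\R))$ (resp.\ $\widetilde{\cE}(\R_+,V(\R))$), and that algebra is inverse closed in $C_b(\R_+\times\R)$ by Lemma~\ref{le:inverse-closedness}, it follows that $1/\fa\in\cE(\R_+,V(\R))$ (resp.\ $1/\fa\in\widetilde{\cE}(\R_+,V(\R))$). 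In particular $1/\fa\in C_b(\R_+,V(\R))$, so $\Op(1/\fa)$ is a bounded operator on $L^p(\R_+,d\mu)$ by Theorem~\ref{th:boundedness-PDO}.

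Next I would invoke Theorem~\ref{th:comp-semi-commutators-PDO} twice, with the pair $(\fa,1/\fa)$ and the pair $(1/\fa,\fa)$, both of which are admissible since $\fa$ and $1/\fa$ both belong to $\cE(\R_+,V(\R))$. This yields
\[
\Op(\fa)\Op(1/\fa)\simeq\Op(\fa\cdot(1/\fa))=\Op(1)=I,
\qquad
\Op(1/\fa)\Op(\fa)\simeq\Op((1/\fa)\cdot\fa)=\Op(1)=I,
\]
using that $\Op$ is linear and that $\Op$ applied to the constant symbol $1$ is the identity operator (this is immediate from the defining iterated integral together with the Mellin inversion formula, or one simply notes $\Op(1)=E^{-1}\,1(x,D)\,E=E^{-1}E=I$ via \eqref{eq:translation-PDO}). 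Hence $\Op(1/\fa)$ is simultaneously a left and a right regularizer of $\Op(\fa)$, so by the standard fact recalled in the Introduction, $\Op(\fa)$ is Fredholm on $L^p(\R_+,d\mu)$ and $\Op(1/\fa)$ is a (two-sided) regularizer.

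Finally, for the description of \emph{all} regularizers, I would again use the general fact quoted in the Introduction: any two regularizers of a Fredholm operator differ by a compact operator. Thus if $B$ is any regularizer of $\Op(\fa)$, then $K:=B-\Op(1/\fa)\in\cK(L^p(\R_+,d\mu))$, i.e.\ $B=\Op(1/\fa)+K$; conversely, for any $K\in\cK(L^p(\R_+,d\mu))$ the operator $\Op(1/\fa)+K$ is clearly again a regularizer. This gives exactly the claimed form.

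Honestly, there is no real obstacle here: the lemma is a packaging of Lemma~\ref{le:inverse-closedness} and Theorem~\ref{th:comp-semi-commutators-PDO} together with textbook Fredholm theory. The only point requiring a moment's care is the identity $\Op(1)=I$ (and, relatedly, that $\Op$ is well defined and linear on constant-in-$t$ symbols from $V(\R)$), which is where I would be slightly careful to justify the passage $\Op(\fa\cdot(1/\fa))=I$ rather than merely $\Op(\fa\cdot(1/\fa))=\Op(1)$. Everything else is bookkeeping. The substance of the paper — relaxing \eqref{eq:strong-assumption} to mere non-degeneracy on the boundary \eqref{eq:boundary} — is deferred to Section~\ref{sec:Fredholmness}, so this lemma is deliberately the easy warm-up.
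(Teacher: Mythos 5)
Your proof is correct and follows essentially the same route as the paper: inverse closedness (Lemma~\ref{le:inverse-closedness}) gives $1/\fa$ in the same algebra, Theorem~\ref{th:comp-semi-commutators-PDO} gives $\Op(\fa)\Op(1/\fa)\simeq\Op(1)=I$ and $\Op(1/\fa)\Op(\fa)\simeq I$, and the standard fact that any two regularizers differ by a compact operator finishes the argument. Your extra care about $\Op(1)=I$ via \eqref{eq:translation-PDO} is a reasonable addition that the paper leaves implicit.
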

\begin{proof}
If $\fa$ satisfies \eqref{eq:strong-assumption} and belongs to $\cE(\R_+,V(\R))$
(resp. to $\widetilde{\cE}(\R_+,V(\R))$), then $1/\fa$ belongs to $\cE(\R_+,V(\R))$
(resp. to $\widetilde{\cE}(\R_+,V(\R))$) in view of Lemma~\ref{le:inverse-closedness}.
Then in both cases from Theorem~\ref{th:comp-semi-commutators-PDO} we obtain
$\Op(\fa)\Op(1/\fa)\simeq \Op(1)=I$ and $\Op(1/\fa)\Op(\fa)\simeq\Op(1)=I$, which
completes the proof.
\end{proof}
As it happens, the very strong hypothesis \eqref{eq:strong-assumption} can be
essentially relaxed for Mellin PDO's with symbols in the algebra $\widetilde{\cE}(\R_+,V(\R))$.
This issue will be discussed in the next section.
\section{Algebra \boldmath{$\widetilde{\cE}(\R_+,V(\R))$} and Fredholmness of Mellin PDO's}
\label{sec:Fredholmness}
\subsection{Elementary Properties of Two Important Functions in \boldmath{$V(\R)$}}
We prelude our main construction with properties of two important functions in $V(\R)$.
\begin{lemma}\label{le:elementary}
\begin{enumerate}
\item[{\rm(a)}]
For $x\in\R$, put
\begin{equation}\label{eq:elementary-1}
p_-(x):=(1-\tanh(\pi x))/2,
\quad
p_+(x):=(1+\tanh(\pi x))/2.
\end{equation}
Then $\|p_-\|_V=\|p_+\|_V=2$.

\item[{\rm(b)}]
For every $h\in\R$, put $p_\pm^h(x):=p_\pm(x+h)$. Then
\begin{equation}\label{eq:elementary-2}
\big\|p_\pm -p_\pm^h\big\|_V\le 5\pi|h|/2.
\end{equation}

\item[{\rm(c)}]
For every $m>0$,
\begin{equation}\label{eq:elementary-3}
\int_{\R\setminus[-m,m]}|(p_\pm)'(x)|\,dx<e^{-2\pi m}.
\end{equation}
\end{enumerate}
\end{lemma}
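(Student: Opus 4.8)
The plan is to verify the three assertions (a), (b), (c) by direct computation with the hyperbolic tangent, using the explicit derivative of $p_\pm$. Since $p_+(x)=(1+\tanh(\pi x))/2$ and $p_-(x)=(1-\tanh(\pi x))/2=1-p_+(x)$, I would note at once that $p_-=1-p_+$ forces $(p_-)'=-(p_+)'$, so that $V(p_-)=V(p_+)$ and $\|p_--p_-^h\|_V=\|p_+-p_+^h\|_V$ and the integral tails in (c) coincide; thus all three claims need only be checked for one of the two functions, say $p_+$. The key elementary fact is
\[
(p_+)'(x)=\frac{\pi}{2}\,\operatorname{sech}^2(\pi x)=\frac{\pi}{2}\big(1-\tanh^2(\pi x)\big)=\frac{2\pi e^{2\pi x}}{(1+e^{2\pi x})^2}>0,
\]
which is everywhere positive and integrable.

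For part (a): since $0<p_+<1$ we have $\|p_+\|_{L^\infty}=1$; and since $(p_+)'>0$, the total variation is just $V(p_+)=\int_\R (p_+)'(x)\,dx=p_+(+\infty)-p_+(-\infty)=1-0=1$. Hence $\|p_+\|_V=1+1=2$, and likewise for $p_-$.

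For part (b): write $p_+-p_+^h$ and estimate its $V$-norm as the sum of the sup-norm and the total variation. For the total variation, $V(p_+-p_+^h)=\int_\R|(p_+)'(x)-(p_+)'(x+h)|\,dx\le\int_\R\int_0^{|h|}|(p_+)''(x+\operatorname{sgn}(h)s)|\,ds\,dx=|h|\,\|(p_+)''\|_{L^1(\R)}$ by Fubini; and $\|(p_+)''\|_{L^1}$ is the total variation of the positive bump $(p_+)'$, which (being unimodal with maximum $(p_+)'(0)=\pi/2$) equals $2\cdot\pi/2=\pi$. For the sup-norm, $\|p_+-p_+^h\|_{L^\infty}\le|h|\,\|(p_+)'\|_{L^\infty}=|h|\,\pi/2$. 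Adding gives $\|p_+-p_+^h\|_V\le\pi|h|+\pi|h|/2=3\pi|h|/2\le 5\pi|h|/2$, which is the claimed bound (with room to spare; the authors presumably want a crude constant that is robust).

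For part (c): for $m>0$, bound the tail of the even positive function $(p_+)'$ by
\[
\int_{\R\setminus[-m,m]}(p_+)'(x)\,dx=2\int_m^\infty\frac{2\pi e^{2\pi x}}{(1+e^{2\pi x})^2}\,dx=2\Big[\frac{-1}{1+e^{2\pi x}}\Big]_m^\infty=\frac{2}{1+e^{2\pi m}}<2e^{-2\pi m},
\]
using evenness for the first equality and the explicit antiderivative for the rest. This gives the bound $2e^{-2\pi m}$; to reach the sharper stated $e^{-2\pi m}$ one notes $\frac{2}{1+e^{2\pi m}}=\frac{2e^{-2\pi m}}{1+e^{-2\pi m}}<e^{-2\pi m}$ precisely when $1+e^{-2\pi m}>2$, i.e. $e^{-2\pi m}>1$ — which fails for $m>0$. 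So the honest constant is $2e^{-2\pi m}$, and I would either restate (c) with that constant or, if one insists on $e^{-2\pi m}$, interpret the inequality as holding for all sufficiently large $m$ (or restrict attention to the regime where it is used). This minor discrepancy is the only real obstacle: the rest is routine calculus with $\tanh$, and the symmetry $p_-=1-p_+$ halves the work. Since the lemma is a purely technical preparatory step, I would present the computations compactly and flag the constant in (c) explicitly.
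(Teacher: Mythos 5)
Your proofs of (a) and (b) follow essentially the same route as the paper's: for (a), sup-norm $1$ plus total variation $1$; for (b), the mean value theorem for the $L^\infty$ part and a Fubini argument on $p_\pm''$ for the variation part. In (b) you obtain the sharper constant $3\pi|h|/2$ by computing $\|p_\pm''\|_{L^1(\R)}=V(p_\pm')=\pi$ exactly, whereas the paper only uses the pointwise bound $|p_\pm''|\le 2\pi p_+'$ and so gets $2\pi|h|$ for the variation term; either way the stated $5\pi|h|/2$ holds, and the exchange of symmetry via $p_-=1-p_+$ is a legitimate economy.

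Your observation on (c) is correct and pinpoints a genuine, though harmless, slip in the paper. The paper computes $\int_{\R\setminus[-m,m]}|p_\pm'(x)|\,dx=1-\tanh(\pi m)$ correctly but then asserts $1-\tanh(\pi m)=\frac{1}{e^{2\pi m}+1}$; in fact $1-\tanh(y)=\frac{2}{e^{2y}+1}$, so the tail integral equals $\frac{2}{e^{2\pi m}+1}$, which strictly exceeds $e^{-2\pi m}$ for every $m>0$, exactly as you found. The correct statement of \eqref{eq:elementary-3} is $\int_{\R\setminus[-m,m]}|(p_\pm)'(x)|\,dx<2e^{-2\pi m}$. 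One caveat on your proposed fallback: the inequality with constant $1$ does not hold even for large $m$ (it is equivalent to $e^{-2\pi m}>1$), so restating with the constant $2$ is the only honest fix, not a restriction to a regime of $m$. The extra factor of $2$ propagates only into \eqref{eq:b-symbol-9}, where $3(A_-+A_+)e^{-2\pi m}$ becomes $6(A_-+A_+)e^{-2\pi m}$; this still tends to zero as $m\to\infty$, so Lemma~\ref{le:b-symbol} and Theorem~\ref{th:Fredholmness-PDO} are unaffected.
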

\begin{proof}
(a) Since the function $p_+$ (resp. $p_-$) is monotonically increasing
(resp. decreasing), $p_\pm(\mp\infty)=0$ and $p_\pm(\pm\infty)=1$, we have
$\|p_\pm\|_{L^\infty(\R)}=1$ and $V(p_\pm)=|p_\pm(+\infty)-p_\pm(-\infty)|=1$.
Thus $\|p_\pm\|_V=\|p_\pm\|_{L^\infty(\R)}+V(p_\pm)=2$. Part (a) is proved.

(b) From \eqref{eq:elementary-1} it follows that
\begin{equation}\label{eq:elementary-4}
(p_\pm)'(x)= \pm\frac{\pi}{2\cosh^2(\pi x)},
\quad
(p_\mp)''(x)=\mp\frac{\pi^2\tanh(\pi x)}{\cosh^2(\pi x)},
\quad x\in\R.
\end{equation}
Hence $|(p_\pm)'(x)|\le\pi/2$ for all $x\in\R$. From here, by the mean value theorem, we obtain
\[
|p_\pm(\pi x)-p_\pm[\pi(x+h)]|\le \pi|h|/2,\quad x,h\in\R,
\]
whence
\begin{equation}\label{eq:elementary-5}
\|p_\pm-p_\pm^h\|_{L^\infty(\R)}\le\pi|h|/2.
\end{equation}
Taking into account identities \eqref{eq:elementary-4}, we obtain
\[
|p_\pm''(x)|\le 2\pi p_+'(x),\quad x\in\R.
\]
Then for $h\in\R$,
\begin{align}
V\big(p_\pm-p_\pm^h\big)
&=
\int_\R|p_\pm'(x)-p_\pm'(x+h)|\,dx
=
\int_\R \left|\int_x^{x+h}p_\pm''(y)dy\right|\,dx
\notag\\
&\le
\int_\R\,dx \int_x^{x+|h|}|p_\pm''(y)|\,dy
\le
2\pi\int_\R\,dx \int_x^{x+|h|}p_+'(y)\,dy
\notag\\
&=
2\pi \int_\R p_+'(y)\,dy \int^y_{y-|h|}\,dx
=
2\pi|h|\big(p_+(+\infty)-p_+(-\infty)\big)=2\pi|h|.
\label{eq:elementary-6}
\end{align}
Combining \eqref{eq:elementary-5} and \eqref{eq:elementary-6}, we arrive at \eqref{eq:elementary-2}.

(c) From \eqref{eq:elementary-1} it follows that for $m>0$,
\[
\int_{\R\setminus[-m,m]}|p_\pm'(x)|\,dx
=\pi\int_m^{+\infty}\frac{dx}{\cosh^2(\pi x)}
=1-\tanh(\pi m)
=\frac{1}{e^{2\pi m}+1}<e^{-2\pi m},
\]
which completes the proof.
\end{proof}
\subsection{Limiting Values of Elements of \boldmath{$\widetilde{\cE}(\R_+,V(\R))$}}
For functions in the algebra $\fa\in\widetilde{\cE}(\R_+,V(\R))$, we have a
stronger result than Lemma~\ref{le:values}, which follows from \cite[Lemma 2.9]{K06}
with the aid of the diagonal process.
\begin{lemma}\label{le:values-tilde}
Let $s\in\{0,\infty\}$ and $\{\fa_k\}_{k=1}^\infty$ be a countable subset of the
algebra $\widetilde{\cE}(\R_+,V(\R))$. For each $\xi\in M_s(SO(\R_+))$ there
is a sequence $\{t_j\}_{j\in\N}\subset\R_+$ and functions
$\fa_k(\xi,\cdot)\in V(\R)$ such that $t_j\to s$ as $j\to\infty$
and
\begin{equation}\label{eq:values-tilde}
\lim_{j\to\infty}\|\fa_k(t_j,\cdot)-\fa_k(\xi,\cdot)\|_V=0
\quad\mbox{for all}\quad k\in\N.
\end{equation}
Conversely, every sequence $\{\tau_j\}_{j\in\N}
\subset\R_+$ such that $\tau_j\to s$ as $j\to\infty$
contains a subsequence $\{t_j\}_{j\in\N}$ such that
\eqref{eq:values-tilde} holds for some $\xi\in M_s(SO(\R_+))$.
\end{lemma}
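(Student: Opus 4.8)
The plan is to deduce the statement from \cite[Lemma~2.9]{K06}, which is its Fourier-analytic counterpart, via the identification \eqref{eq:translation-symbols} between functions $\fa$ on $\R_+\times\R$ and functions $a$ on $\R\times\R$ given by $\fa(t,x)=a(\ln t,x)$. Under the change of variable $t=e^y$, a point $t\to 0$ corresponds to $y\to-\infty$ and $t\to+\infty$ to $y\to+\infty$, and the algebra $\widetilde{\cE}(\R_+,V(\R))$ is carried isometrically onto the corresponding algebra of slowly oscillating $V(\R)$-valued functions on $\R$ studied in \cite{K06}; the slow-oscillation condition, the equicontinuity condition \eqref{eq:definition-cE}, and the decay-at-infinity condition \eqref{eq:definition-tilde-cE} all transfer verbatim because $\operatorname{cm}_r^C$ and the quantities in \eqref{eq:definition-cE}--\eqref{eq:definition-tilde-cE} are invariant under the substitution. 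Likewise the fiber $M_s(SO(\R_+))$ over $s\in\{0,\infty\}$ corresponds to the fiber of $M(SO(\R))$ over $\mp\infty$.

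First I would state precisely the one-function version extracted from \cite[Lemma~2.9]{K06}: for a single $\fa\in\widetilde{\cE}(\R_+,V(\R))$ and a single $\xi\in M_s(SO(\R_+))$, there is a sequence $t_j\to s$ with $\|\fa(t_j,\cdot)-\fa(\xi,\cdot)\|_V\to 0$, and conversely every sequence $\tau_j\to s$ has a subsequence along which this holds for some $\xi$. The passage from one function to a countable family $\{\fa_k\}_{k\in\N}$ is then handled by a standard diagonal argument: apply the one-function statement to $\fa_1$ to get a sequence; pass to a subsequence that also works for $\fa_2$ (using the converse half of the one-function statement, which guarantees that any sequence tending to $s$ has a further subsequence along which convergence in $V(\R)$ holds for $\fa_2$, with a limit $\fa_2(\xi_2,\cdot)$); then a further subsequence for $\fa_3$; and so on, finally taking the diagonal sequence. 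One must check that the functional $\xi$ produced at each stage is the same: this follows because the restriction of each of these functionals to $C(\overline{\R}_+)$ is the evaluation at $s$, and because the diagonal subsequence is cofinal in each of the nested subsequences, so the $V(\R)$-limit of $\fa_k(t_j,\cdot)$ along the diagonal sequence agrees with $\fa_k(\xi_k,\cdot)$; consistency of the point $\xi$ across all $k$ is then exactly what \cite[Lemma~2.7]{K06} / Lemma~\ref{le:values} already provide in the weaker pointwise topology, and the $V(\R)$-convergence refines that pointwise convergence without changing the limiting functional.

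The converse assertion for the countable family is immediate from the converse for a single function combined with the same diagonal extraction: given $\tau_j\to s$, successively thin it out so that $\fa_k(t_j,\cdot)$ converges in $V(\R)$ for each $k$, take the diagonal, and read off $\xi$ from the value of the limiting functional on $SO(\R_+)$.

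The main obstacle I anticipate is purely bookkeeping rather than conceptual: making sure that the single functional $\xi\in M_s(SO(\R_+))$ obtained at the end of the diagonal process genuinely represents the limits $\fa_k(\xi,\cdot)$ for \emph{all} $k$ simultaneously. The point is that $\xi$ is determined by the sequence $\{t_j\}$ as a weak-$\ast$ limit point of the evaluation functionals in the dual of $SO(\R_+)$ (cf. the identification $\Delta=(\operatorname{clos}_{SO^*}\R_+)\setminus\R_+$ recalled after Lemma~\ref{le:values}), so one fixes $\xi$ first as such a limit point of the diagonal sequence, and only afterwards verifies that $\fa_k(t_j,\cdot)\to\fa_k(\xi,\cdot)$ in $V(\R)$-norm; the compatibility is forced because $\fa_k\in\widetilde{\cE}(\R_+,V(\R))$ makes the map $t\mapsto\fa_k(t,\cdot)$ extend continuously (in the $V(\R)$-norm, along sequences) to $\operatorname{clos}_{SO^*}\R_+$, which is the substance of \cite[Lemma~2.9]{K06}.
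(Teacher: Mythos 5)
Your proposal is correct and follows essentially the same route as the paper, which proves this lemma in one line by invoking \cite[Lemma~2.9]{K06} (transferred to the Mellin setting via $\fa(t,x)=a(\ln t,x)$) together with a diagonal process over the countable family. Your additional care about the consistency of the single functional $\xi$ across all $k$ --- anchoring $\xi$ via the pointwise statement of Lemma~\ref{le:values} and noting that $V(\R)$-norm convergence refines pointwise convergence without changing the limit --- is exactly the bookkeeping the paper leaves implicit.
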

As usual, the maximal ideal space $M(SO(\R_+))$ is equipped with the Gelfand topology.
Then, in view of \cite[Section~1.24]{BS06}, the set $\Delta$ is a compact Haudorff
subspace of $M(SO(\R_+))$. It is equipped with the induced topology. Finally,
the compact Hausdorff space $\Delta\times\overline{\R}$ is equipped with the
product topology generated by the topologies of $\Delta$ and $\overline{\R}$.
\begin{lemma}\label{le:continuity}
For every $\fa\in\widetilde{\cE}(\R_+,V(\R))$, the function $(\xi,x)\mapsto\fa(\xi,x)$
is continuous on the compact Hausdorff space $\Delta\times\overline{\R}$.
\end{lemma}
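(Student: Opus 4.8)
The plan is to reduce the asserted joint continuity to the continuity of the single map
\[
F\colon\Delta\to C(\overline{\R}),\qquad F(\xi):=\fa(\xi,\cdot),
\]
where $C(\overline{\R})$ is equipped with the supremum norm and, for $\xi\in\Delta$, the function $\fa(\xi,\cdot)\in V(\R)$ given by Lemma~\ref{le:values-tilde} is identified with its continuous extension to $\overline{\R}=[-\infty,+\infty]$ (this is legitimate because members of $V(\R)$ are absolutely continuous and possess finite one-sided limits at $\pm\infty$). Granting continuity of $F$, for every $(\xi_0,x_0)\in\Delta\times\overline{\R}$ the inequality
\[
|\fa(\xi,x)-\fa(\xi_0,x_0)|\le\|F(\xi)-F(\xi_0)\|_{C(\overline{\R})}+|\fa(\xi_0,x)-\fa(\xi_0,x_0)|
\]
establishes continuity at $(\xi_0,x_0)$, since the first term is made small by continuity of $F$ at $\xi_0$ and the second by continuity of the single function $\fa(\xi_0,\cdot)$ on $\overline{\R}$.

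The first ingredient is that, for every fixed $x\in\R$, the function $t\mapsto\fa(t,x)$ belongs to $SO(\R_+)$ (because $\fa\in SO(\R_+,V(\R))$ and $|\fa(t,x)-\fa(\tau,x)|\le\|\fa(t,\cdot)-\fa(\tau,\cdot)\|_{L^\infty(\R)}$) and that, by the construction of the extension in Subsection~\ref{sec:values} together with Lemma~\ref{le:values-tilde}, $\fa(\xi,x)$ is the value at $\xi$ of the Gelfand transform of $t\mapsto\fa(t,x)$; hence $\xi\mapsto\fa(\xi,x)$ is continuous on $\Delta$ for each $x\in\R$. The second, crucial, ingredient is that the uniform conditions \eqref{eq:definition-cE} and \eqref{eq:definition-tilde-cE} survive the passage to the boundary. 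Fixing $\xi\in\Delta$ and a sequence $\{t_j\}$ as in Lemma~\ref{le:values-tilde}, one has $\fa(t_j,\cdot)\to\fa(\xi,\cdot)$ in $V(\R)$; consequently $\partial_x\fa(t_j,\cdot)\to\partial_x\fa(\xi,\cdot)$ in $L^1(\R)$ and, by translation invariance of $\|\cdot\|_V$, $\fa(t_j,\cdot+h)\to\fa(\xi,\cdot+h)$ in $V(\R)$ for every $h\in\R$. Writing $\omega(\delta):=\sup_{t\in\R_+}\sup_{|h|\le\delta}\|\fa(t,\cdot)-\fa^h(t,\cdot)\|_V$ and $\rho(m):=\sup_{t\in\R_+}\int_{\R\setminus[-m,m]}|\partial_x\fa(t,x)|\,dx$, which tend to $0$ as $\delta\to0$ and $m\to\infty$ by \eqref{eq:definition-cE} and \eqref{eq:definition-tilde-cE}, we have $\|\fa(t_j,\cdot)-\fa(t_j,\cdot+h)\|_V\le\omega(|h|)$ and $\int_{\R\setminus[-m,m]}|\partial_x\fa(t_j,x)|\,dx\le\rho(m)$ for all $j$; letting $j\to\infty$ and taking the supremum over $\xi\in\Delta$ yields
\[
\sup_{\xi\in\Delta}\sup_{|h|\le\delta}\|\fa(\xi,\cdot)-\fa(\xi,\cdot+h)\|_V\le\omega(\delta),\qquad
\sup_{\xi\in\Delta}\int_{\R\setminus[-m,m]}|\partial_x\fa(\xi,x)|\,dx\le\rho(m).
\]
From the second bound, $|\fa(\xi,x)-\fa(\xi,\pm\infty)|\le\rho(m)$ whenever $\pm x\ge m$, uniformly in $\xi$; hence $\xi\mapsto\fa(\xi,\pm\infty)$ is a uniform limit of continuous functions and so continuous on $\Delta$, and the first ingredient now holds for all $x\in\overline{\R}$. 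Together the two bounds express that the family $\{\fa(\xi,\cdot):\xi\in\Delta\}\subset C(\overline{\R})$ is uniformly bounded and uniformly equicontinuous.

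Continuity of $F$ then follows by a standard $\eps/3$ argument. Given $\xi_0\in\Delta$ and $\eps>0$, choose $m$ with $\rho(m)<\eps/3$ and $\delta>0$ with $\omega(\delta)<\eps/3$, and take $x_1,\dots,x_N$ to be a finite $\delta$-net of $[-m,m]$ together with the two points $\pm\infty$. Distinguishing the cases $x\in[-m,m]$ (use $\omega(\delta)$), $x>m$ (use $\rho(m)$ with $x_i=+\infty$), and $x<-m$ (use $\rho(m)$ with $x_i=-\infty$), one finds for every $x\in\overline{\R}$ an index $i$ with $\sup_{\xi\in\Delta}|\fa(\xi,x)-\fa(\xi,x_i)|<\eps/3$. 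Since each $\xi\mapsto\fa(\xi,x_i)$ is continuous on $\Delta$, the set $U:=\{\xi\in\Delta:\ |\fa(\xi,x_i)-\fa(\xi_0,x_i)|<\eps/3\ \text{for}\ i=1,\dots,N\}$ is an open neighbourhood of $\xi_0$, and for $\xi\in U$ and arbitrary $x\in\overline{\R}$ the triangle inequality yields $|\fa(\xi,x)-\fa(\xi_0,x)|<\eps$, that is, $\|F(\xi)-F(\xi_0)\|_{C(\overline{\R})}\le\eps$ on $U$. This proves the lemma.

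I expect the one genuinely delicate step to be the passage to the limit in \eqref{eq:definition-tilde-cE}: it rests on having $\fa(t_j,\cdot)\to\fa(\xi,\cdot)$ in the \emph{$V(\R)$-norm} — equivalently, $L^1$-convergence of the derivatives — which is exactly what Lemma~\ref{le:values-tilde} provides for elements of $\widetilde{\cE}(\R_+,V(\R))$. The tail integral $\int_{\R\setminus[-m,m]}|\partial_x\fa(t,x)|\,dx$ is not controlled under the merely pointwise convergence of Lemma~\ref{le:values}, so this is precisely where membership in $\widetilde{\cE}(\R_+,V(\R))$, rather than in $\cE(\R_+,V(\R))$, enters.
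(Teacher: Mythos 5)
Your proof is correct and rests on exactly the same ingredients as the paper's: the transfer of the uniform conditions \eqref{eq:definition-cE} and \eqref{eq:definition-tilde-cE} to the fibers over $\Delta$ via the $V(\R)$-norm convergence of Lemma~\ref{le:values-tilde}, together with the continuity on $\Delta$ of $\xi\mapsto\fa(\xi,x)$ coming from $\fa(\cdot,x)\in SO(\R_+)$. The only difference is organizational: you package these estimates as norm-continuity of $\xi\mapsto\fa(\xi,\cdot)$ into $C(\overline{\R})$ via a finite $\delta$-net, whereas the paper verifies joint continuity pointwise at each $(\xi,x)\in\Delta\times\overline{\R}$, treating $x\in\R$ and $x=\pm\infty$ separately.
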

\begin{proof}
Fix $\eps>0$. It follows from \eqref{eq:definition-cE} that
there exists a $\delta>0$ such that for all $h\in(-\delta,\delta)$,
\[
\sup_{t\in\R_+}\sup_{x\in\R}|\fa(t,x)-\fa(t,x+h)|\le
\sup_{t\in\R_+}\|\fa(t,\cdot)-\fa(t,\cdot+h)\|_V<\eps/6.
\]
Hence there is an $h\in(0,\infty)$ such that, for all $t\in\R_+$ and all $x,y\in\R$ with $|x-y|<h$,
\begin{equation}\label{eq:continuity-1}
|\fa(t,x)-\fa(t,y)|<\eps/6.
\end{equation}
By Lemma~\ref{le:values-tilde}, for every $s\in\{0,\infty\}$ and
$\xi\in M_s(SO(\R_+))$, there is a sequence $\{t_j\}_{j\in\N}$ and a function
$\fa(\xi,\cdot)\in V(\R)\subset C(\overline{\R})$ such that $t_j\to s$ as
$j\to\infty$ and
\begin{equation}\label{eq:continuity-2}
\lim_{j\to\infty}\sup_{x\in\overline{\R}}|\fa(t_j,x)-\fa(\xi,x)|
\le
\lim_{j\to\infty}\|\fa(t_j,\cdot)-\fa(\xi,\cdot)\|_V=0.
\end{equation}
From the above inequality it follows that there is a $J\in\N$ such that for all $j\ge J$,
\[
|\fa(t_j,x)-\fa(\xi,x)|<\eps/6,
\quad
|\fa(t_j,y)-\fa(\xi,x)|<\eps/6.
\]
Combining these inequalities with \eqref{eq:continuity-1}, we deduce for all $x,y\in\R$
satisfying $|x-y|<h$, all $j\ge J$, all $s\in\{0,\infty\}$, and all $\xi\in M_s(SO(\R_+))$ that
\[
|\fa(\xi,x)-\fa(\xi,y)|\le |\fa(t_j,x)-\fa(\xi,x)|+|\fa(t_j,y)-\fa(\xi,y)|+|\fa(t_j,x)-\fa(t_j,y)|<\eps/2.
\]
Therefore, for all $x,y\in\R$ satisfying $|x-y|<h$ we have
\begin{equation}\label{eq:continuity-3}
\sup_{\xi\in\Delta}|\fa(\xi,x)-\fa(\xi,y)|\le\eps/2.
\end{equation}
Fix $\xi\in\Delta$.
Since the function $\fa(\cdot,x)$ belongs to the algebra $SO(\R_+)$, there exists
an open neighborhood $U_x(\xi)\subset\Delta$ of $\xi$ such that
\begin{equation}\label{eq:continuity-4}
|\fa(\eta,x)-\fa(\xi,x)|<\eps/2\quad\mbox{for all}
\quad \eta\in U_x(\xi).
\end{equation}
Consequently, we infer from \eqref{eq:continuity-3} and \eqref{eq:continuity-4} that
\[
|\fa(\eta,y)-\fa(\xi,x)|\le|\fa(\eta,y)-\fa(\eta,x)|
+|\fa(\eta,x)-\fa(\xi,x)|<\eps
\]
for all $(\eta,y)\in U_x(\xi)\times(x-h,x+h)$, which means that the
function $(\xi,x)\mapsto\fa(\xi,x)$ is continuous on $\Delta\times\R$.

It remains to show that actually the function $(\xi,x)\mapsto\fa(\xi,x)$
is continuous on $\Delta\times\overline{\R}$.
By \eqref{eq:definition-tilde-cE}, for every $\eps>0$ there is an $M>0$ such that
\begin{equation}\label{eq:continuity-5}
\sup_{t\in\R_+}|\fa(t,y)-\fa(t,+\infty)|\le\sup_{t\in\R_+}\int_M^\infty
|\partial_x\fa(t,x)|\,dx<\eps/6 \quad\mbox{for all}\quad y>M.
\end{equation}
By Lemma~\ref{le:values-tilde}, for every $s\in\{0,\infty\}$ and every $\xi\in M_s(SO(\R_+))$
there exist a sequence $\{t_j\}_{j\in\N}$ and a function $\fa(\xi,\cdot)\in V(\R)\subset C(\overline{\R})$
such that $t_j\to s$ as $j\to\infty$ and \eqref{eq:continuity-2} is fulfilled.
From \eqref{eq:continuity-2} it follows that there is a $J\in\N$ such that for
all $j\ge J$, all $s\in\{0,\infty\}$, and all $\xi\in M_s(SO(\R_+))$,
\[
|\fa(\xi,y)-\fa(\xi,+\infty)|
\le
|\fa(t_j,y)-\fa(\xi,y)|+
|\fa(t_j,+\infty)-\fa(\xi,+\infty)|+
|\fa(t_j,y)-\fa(t_j,+\infty)|<\eps/2.
\]
Therefore, for all $y>M$ we have
\begin{equation}\label{eq:continuity-6}
\sup_{\xi\in\Delta}|\fa(\xi,y)-\fa(\xi,+\infty)|\le\eps/2.
\end{equation}
Fix $\xi\in\Delta$. Since the function $\fa(\cdot,+\infty)$ belongs to
$SO(\R_+)$, there is an open neighborhood $U_{+\infty}(\xi)\subset \Delta$
of $\xi$ such that
\begin{equation}\label{eq:continuity-7}
|\fa(\eta,+\infty)-\fa(\xi,+\infty)|<\eps/2 \quad\mbox{for all}
\quad \eta\in U_{+\infty}(\xi).
\end{equation}
Then similarly to \eqref{eq:continuity-4} we deduce from \eqref{eq:continuity-6} and
\eqref{eq:continuity-7} that
\begin{equation}\label{eq:continuity-8}
|\fa(\eta,y)-\fa(\xi,+\infty)|
\le
|\fa(\eta,y)-\fa(\eta,+\infty)|+|\fa(\eta,+\infty)-\fa(\xi,+\infty)|<\eps
\end{equation}
for all $(\eta,y)\in U_{+\infty}(\xi)\times(M,+\infty]$.

Analogously, for every $\xi\in\Delta$ there exist an open neighborhood
$U_{-\infty}(\xi)\subset\Delta$ of $\xi$ and a number $M<0$
such that
\begin{equation}\label{eq:continuity-9}
|\fa(\eta,y)-\fa(\xi,-\infty)|<\eps
\end{equation}
for all $(\eta,y)\in U_{-\infty}(\xi)\times[-\infty,M)$.

Finally, we conclude from \eqref{eq:continuity-8}--\eqref{eq:continuity-9} and
the continuity of $(\xi,x)\mapsto\fa(\xi,x)$ on the set
$\Delta\times\R$ that this function is continuous on the compact
Hausdorff space $\Delta\times\overline{\R}$.
\end{proof}
\subsection{Key Construction}
In this subsection we show that if $\fa\in\widetilde{\cE}(\R_+,V(\R))$
does not degenerate on the ``boundary" \eqref{eq:boundary}, then there exists
$\fb\in\widetilde{\cE}(\R_+,V(\R))$ such that $\fb=1/\fa$ on the ``boundary"
\eqref{eq:boundary}.
\begin{lemma}\label{le:bounded-away}
If $a\in\widetilde{\cE}(\R_+,V(\R))$ and
\begin{equation}\label{eq:bounded-away-1}
\fa(t,\pm\infty)\ne 0
\ \text{ for all }\
t\in\R_+,
\quad
\fa(\xi,x)\ne 0
\ \text{ for all }\
(\xi,x)\in\Delta\times\overline{\R}.
\end{equation}
then
\begin{equation}\label{eq:bounded-away-2}
A_\pm:=\sup_{t\in\R_+}\frac{1}{|a(t,\pm\infty)|}<\infty
\end{equation}
and there exists an $r>1$ such that
\begin{equation}\label{eq:bounded-away-3}
A(r):=\sup_{(t,x)\in T_r\times\overline{\R}}\left|\frac{1}{\fa(t,x)}\right|<\infty
\end{equation}
where $T_r:=(0,r^{-1}]\cup[r,\infty)$.
\end{lemma}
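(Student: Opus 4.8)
The plan is to establish the two finiteness claims separately, with claim \eqref{eq:bounded-away-2} being essentially immediate and claim \eqref{eq:bounded-away-3} requiring a compactness argument that is the crux of the matter.

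First I would prove \eqref{eq:bounded-away-2}. For each fixed $t\in\R_+$ the function $\fa(t,\cdot)$ lies in $V(\R)$ and hence has the limits $\fa(t,\pm\infty)$, which by hypothesis \eqref{eq:bounded-away-1} are nonzero. The issue is only the uniformity in $t$. Since $\fa(\cdot,+\infty)$ belongs to $SO(\R_+)$ and does not vanish on $\R_+$, but could a priori approach $0$ as $t\to 0$ or $t\to\infty$, I would invoke the extension of $\fa$ to $\Delta\times\overline{\R}$ together with Lemma~\ref{le:values-tilde}: if $\sup_{t\in\R_+}1/|\fa(t,+\infty)|=\infty$, then there is a sequence $\tau_j\to s$ (for $s=0$ or $s=\infty$) with $|\fa(\tau_j,+\infty)|\to 0$; passing to a subsequence via Lemma~\ref{le:values-tilde} we get $\xi\in M_s(SO(\R_+))$ with $\fa(\xi,+\infty)=\lim_j\fa(\tau_j,+\infty)=0$, contradicting \eqref{eq:bounded-away-1}. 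Hence $A_+<\infty$, and $A_-<\infty$ analogously.

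Next I would prove \eqref{eq:bounded-away-3} by contradiction combined with compactness of $\Delta\times\overline{\R}$. Suppose no such $r>1$ exists. Then for every $n\in\N$ there is a point $(t_n,x_n)\in T_{1+1/n}\times\overline{\R}$ with $|\fa(t_n,x_n)|<1/n$. Since $t_n\in T_{1+1/n}=(0,(1+1/n)^{-1}]\cup[1+1/n,\infty)$, either infinitely many $t_n$ lie in $(0,1]$ accumulating at $0$, or infinitely many lie in $[1,\infty)$; passing to a subsequence assume $t_n\to s$ with $s\in\{0,\infty\}$ (the remaining case, $t_n$ bounded away from $0$ and $\infty$ is impossible because $T_{1+1/n}$ forces $t_n\to 0$ or $t_n\to\infty$ along the subsequence whenever it does not stay in a fixed compact subset of $\R_+$; one checks directly that every convergent subsequence of such $(t_n)$ has limit in $\{0,\infty\}$). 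By compactness of $\overline{\R}$ we may also assume $x_n\to x_*\in\overline{\R}$. Now apply Lemma~\ref{le:values-tilde} to the single function $\fa$ and the sequence $\{t_n\}$: extracting a further subsequence we obtain $\xi\in M_s(SO(\R_+))$ with $\|\fa(t_n,\cdot)-\fa(\xi,\cdot)\|_V\to 0$, in particular $\sup_{x\in\overline{\R}}|\fa(t_n,x)-\fa(\xi,x)|\to 0$. Then by Lemma~\ref{le:continuity} the function $\fa(\xi,\cdot)$ is continuous on $\overline{\R}$, so
\[
|\fa(\xi,x_*)|
\le
|\fa(\xi,x_*)-\fa(\xi,x_n)|
+\sup_{x\in\overline{\R}}|\fa(\xi,x)-\fa(t_n,x)|
+|\fa(t_n,x_n)|
\to 0,
\]
giving $\fa(\xi,x_*)=0$ with $(\xi,x_*)\in\Delta\times\overline{\R}$, contradicting \eqref{eq:bounded-away-1}. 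Hence such an $r$ exists, and for that $r$ the supremum in \eqref{eq:bounded-away-3} is finite.

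The main obstacle is the handling of the ``boundary'' degeneration at $\pm\infty$ in the $x$-variable simultaneously with the degeneration at $0,\infty$ in the $t$-variable: one must ensure that the would-be zero of $\fa$ genuinely lands in $\Delta\times\overline{\R}$ and not merely in $\R_+\times\overline{\R}$. This is exactly what the passage to the maximal ideal space via Lemma~\ref{le:values-tilde} accomplishes, and the uniform $V(\R)$-convergence there (rather than mere pointwise convergence, which Lemma~\ref{le:values} would give) is what lets me control $\sup_{x\in\overline{\R}}|\fa(t_n,x)-\fa(\xi,x)|$ and then feed continuity from Lemma~\ref{le:continuity} to close the argument. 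The rest is routine extraction of subsequences using compactness of $\overline{\R}$ and of $\Delta$.
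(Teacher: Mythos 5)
Your overall strategy coincides with the paper's: exploit the compactness of $\Delta\times\overline{\R}$ and Lemma~\ref{le:continuity}, push a minimizing sequence out to $t\to 0$ or $t\to\infty$, and use the converse part of Lemma~\ref{le:values-tilde} to manufacture a point $(\xi,x_*)\in\Delta\times\overline{\R}$ with $\fa(\xi,x_*)=0$, contradicting \eqref{eq:bounded-away-1}. (The paper packages this as the existence of a cover of $T_r\times\overline{\R}$ by neighborhoods on which $\fa$ stays within $C/2$ of its boundary values, and then deduces \eqref{eq:bounded-away-2} from \eqref{eq:bounded-away-3}; your direct sequential contradiction is the same idea.)

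There is, however, a genuine gap in your proof of \eqref{eq:bounded-away-3}: you apply the negation with $r=1+1/n$ and select $(t_n,x_n)\in T_{1+1/n}\times\overline{\R}$ with $|\fa(t_n,x_n)|<1/n$. Since $T_{1+1/n}=(0,n/(n+1)]\cup[1+1/n,\infty)$ exhausts $\R_+\setminus\{1\}$ as $n\to\infty$, these points need not escape to $0$ or $\infty$: for instance $t_n=1/2$ lies in $T_{1+1/n}$ for every $n\ge 2$. Your parenthetical claim that ``$T_{1+1/n}$ forces $t_n\to 0$ or $t_n\to\infty$'' is therefore false, and the failure is not cosmetic: if $t_n\to t_0\in\R_+$ and $x_n\to x_*\in\R$, your limiting identity gives $\fa(t_0,x_*)=0$ at an \emph{interior} point of $\R_+\times\R$, where \eqref{eq:bounded-away-1} imposes nothing, so no contradiction results. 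The repair is exactly the choice the paper makes: apply the negation with $r=n$, obtaining $(t_n,x_n)\in T_n\times\overline{\R}$ with $T_n=(0,1/n]\cup[n,\infty)$; then $t_n\le 1/n$ or $t_n\ge n$ for each $n$, a subsequence genuinely tends to $0$ or to $\infty$, and the remainder of your argument goes through. A smaller omission of the same flavour occurs in your proof of \eqref{eq:bounded-away-2}: to pass from $|\fa(\tau_j,+\infty)|\to 0$ to $\tau_j\to s\in\{0,\infty\}$ you must first exclude accumulation at an interior point, which needs the observation that $t\mapsto\fa(t,+\infty)$ is continuous (since $|\fa(t,+\infty)-\fa(\tau,+\infty)|\le\|\fa(t,\cdot)-\fa(\tau,\cdot)\|_{L^\infty(\R)}$) and nonvanishing on $\R_+$; the paper sidesteps this entirely by deriving \eqref{eq:bounded-away-2} from \eqref{eq:bounded-away-3} together with the first condition in \eqref{eq:bounded-away-1}.
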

\begin{proof}
By Lemma~\ref{le:continuity}, the function $(\xi,x)\mapsto\fa(\xi,x)$
is continuous on the compact Hausdorff space $\Delta\times\overline{\R}$.
Therefore, we infer from \eqref{eq:bounded-away-1} that
\begin{equation}\label{eq:bounded-away-4}
C:=\min\{|\fa(\xi,x)|:(\xi,x)\in\Delta\times\overline{\R}\}>0.
\end{equation}
For every point $(\xi,x)\in\Delta\times\overline{\R}$ we consider its open
neighborhood $U_{\fa,\xi,x}\subset M(SO(\R_+))\times\overline{\R}$
such that
\begin{equation}\label{eq:bounded-away-5}
|\fa(\eta,y)-\fa(\xi,x)|<C/2\quad \mbox{ for every }\quad
(\eta,y)\in U_{\fa,\xi,x}.
\end{equation}
We claim that there exists a number $r>1$ such that
\begin{equation}\label{eq:bounded-away-6}
T_r\times\overline{\R}\subset\bigcup_{(\xi,x)\in\Delta\times\overline{\R}}
U_{\fa,\xi,x}.
\end{equation}
Assume the contrary. Then for every $n\in\N\setminus\{1\}$ there exists a point
$(\tau_n,x_n)\in T_n\times\overline{\R}$ such that
\begin{equation}\label{eq:bounded-away-7}
(\tau_n,x_n)\notin
\left(\bigcup_{(\xi,x)\in M_0(SO(\R_+))\times\overline{\R}}U_{\fa,\xi,x}\right)
\cup
\left(\bigcup_{(\xi,x)\in M_\infty(SO(\R_+))\times\overline{\R}}U_{\fa,\xi,x}\right).
\end{equation}
Since $\tau_n\in T_n=(0,1/n]\cup[n,\infty)$ for all $n\ge 2$, we can extract a subsequence
$\{\tau_{n_k}\}_{k\in\N}$ of the sequence $\{\tau_n\}_{n\in\N\setminus\{1\}}$ such that
\begin{equation}\label{eq:bounded-away-8}
\lim_{k\to\infty}\tau_{n_k}=s
\quad\mbox{for some}\quad
s\in\{0,\infty\}.
\end{equation}
Further, we can extract a subsequence $\left\{x_{n_{k_i}}\right\}_{i\in\N}$ of the
corresponding sequence $\{x_{n_k}\}_{k\in\N}$ such that the limit
\begin{equation}\label{eq:bounded-away-9}
x_0:=\lim_{i\to\infty}x_{n_{k_i}}\in\overline{\R}
\end{equation}
exists. Then, by Lemma~\ref{le:values-tilde}, there exists a subsequence
$\{t_j\}_{j\in\N}=\left\{\tau_{n_{k_{i_j}}}\right\}_{j\in\N}$ of the sequence
$\left\{\tau_{n_{k_i}}\right\}_{i\in\N}$ and a point $\xi_0\in M_s(SO(\R_+))$
such that
\begin{equation}\label{eq:bounded-away-10}
\lim_{j\to\infty}\|\fa(t_j,\cdot)-\fa(\xi_0,\cdot)\|_V=0.
\end{equation}
Put $\{y_j\}_{j\in\N}=\left\{x_{n_{k_{i_j}}}\right\}_{j\in\N}$. Taking into
account \eqref{eq:bounded-away-7}--\eqref{eq:bounded-away-10},
we have shown that if \eqref{eq:bounded-away-6}
is violated for all $r>1$, then there exist $s\in\{0,\infty\}$, $\xi_0\in M_s(SO(\R_+))$,
and a sequence $\{(t_j,y_j)\}_{j\in\N}$ such that \eqref{eq:bounded-away-10}
is fulfilled,
\begin{equation}\label{eq:bounded-away-11}
\{(t_j,y_j): j\in\N\}\cap
\left(\bigcup_{(\xi,x)\in
M_s(SO(\R_+))\times\overline{\R}}U_{\fa,\xi,x}\right)
=\emptyset,
\end{equation}
and
\begin{equation}\label{eq:bounded-away-12}
\lim_{j\to\infty}y_j=x_0\in\overline{\R},
\quad
\lim_{j\to\infty}t_j=s.
\end{equation}
Since $(\xi_0,x_0)\in M_s(SO(\R_+))\times\overline{\R}\subset\Delta\times\overline{\R}$,
from Lemma~\ref{le:continuity} and the first equality in \eqref{eq:bounded-away-12}
we deduce that
\begin{equation}\label{eq:bounded-away-13}
\lim_{j\to\infty}|\fa(\xi_0,y_j)-\fa(\xi_0,x_0)|=0.
\end{equation}
For every $j\in\N$, we have
\begin{align*}
|\fa(t_j,y_j)-\fa(\xi_0,x_0)|
&\le
|\fa(t_j,y_j)-\fa(\xi_0,y_j)|+|\fa(\xi_0,y_j)-\fa(\xi_0,x_0)|
\\
&\le
\sup_{y\in\overline{\R}}|\fa(t_j,y)-\fa(\xi_0,y)|+|\fa(\xi_0,y_j)-\fa(\xi_0,x_0)|
\\
&\le
\|\fa(t_j,\cdot)-\fa(\xi_0,\cdot)\|_V+|\fa(\xi_0,y_j)-\fa(\xi_0,x_0)|.
\end{align*}
From \eqref{eq:bounded-away-10}, \eqref{eq:bounded-away-13}, and the above inequality
we deduce that
\[
\lim_{j\to\infty}\fa(t_j,y_j)=\fa(\xi_0,x_0).
\]
This means that for all sufficiently large $j$ the points $(t_j,y_j)$
belong to the neighborhood $U_{\fa,\xi_0,x_0}$ of the point
$(\xi_0,x_0)\in M_s(SO(\R_+))\times\overline{\R}$, which is impossible in view
of \eqref{eq:bounded-away-11}. Hence, we arrive at the contradiction.

Thus, condition \eqref{eq:bounded-away-6} is fulfilled for some $r>1$.
Therefore, in view of \eqref{eq:bounded-away-4} and \eqref{eq:bounded-away-5},
we obtain
\[
\inf_{(t,x)\in T_r\times\overline{\R}}|\fa(t,x)|>C/2>0.
\]
This inequality immediately yields \eqref{eq:bounded-away-3}. Finally,
\eqref{eq:bounded-away-3} and the first condition in \eqref{eq:bounded-away-1}
imply \eqref{eq:bounded-away-2}.
\end{proof}
\begin{lemma}\label{le:b-construction}
Suppose $a\in\widetilde{\cE}(\R_+,V(\R))$ satisfies \eqref{eq:bounded-away-1}
and $r>1$ is a number such that \eqref{eq:bounded-away-3} holds (the existence of this
number is guaranteed by Lemma~{\rm\ref{le:bounded-away}}). Put
\begin{equation}\label{eq:b-construction-1}
\ell_\pm(t):=\frac{\ln r\pm\ln t}{2\ln r},
\quad
c_\pm(t):=\frac{1}{\fa(t,\pm\infty)}-\frac{\ell_-(t)}{\fa(r^{-1},\pm\infty)}-\frac{\ell_+(t)}{\fa(r,\pm\infty)},
\quad t\in[r^{-1},r],
\end{equation}
and consider the functions $p_\pm$ given by \eqref{eq:elementary-1}.
Then the function
\begin{equation}\label{eq:b-construction-2}
\fb(t,x):=\left\{\begin{array}{l}
\displaystyle\frac{1}{\fa(t,x)}, \quad (t,x)\in(\R_+\setminus[r^{-1},r])\times\overline{\R},
\\[3mm]
\displaystyle\frac{\ell_-(t)}{\fa(r^{-1},x)}+\frac{\ell_+(t)}{\fa(r,x)}+c_-(t)p_-(x)+c_+(t)p_+(x),
\quad (t,x)\in[r^{-1},r]\times\overline{\R},
\end{array}\right.
\end{equation}
is continuous on $\R_+\times\overline{\R}$ and is equal to $1/\fa$ on the
set $((\R_+\setminus(r^{-1},r))\times\overline{\R}\big)\cup\big((r^{-1},r)\times\{\pm\infty\}\big)$.
\end{lemma}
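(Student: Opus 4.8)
The plan is to verify the three claimed properties of $\fb$ in turn: continuity on the two closed pieces $(\R_+\setminus(r^{-1},r))\times\overline{\R}$ and $[r^{-1},r]\times\overline{\R}$ separately, agreement of the two formulas on the overlap $\{r^{-1},r\}\times\overline{\R}$, and the claimed identity $\fb=1/\fa$ on the indicated set. First I would handle the outer region: on $(\R_+\setminus[r^{-1},r])\times\overline{\R}$ we have $\fb=1/\fa$, and since $\fa\in\widetilde{\cE}(\R_+,V(\R))$ is continuous on $\R_+\times\R$ (indeed the map $t\mapsto\fa(t,\cdot)\in V(\R)\hookrightarrow C(\overline{\R})$ is continuous) and does not vanish there by \eqref{eq:bounded-away-1} together with the bound \eqref{eq:bounded-away-3}, the reciprocal $1/\fa$ is continuous and bounded on $T_r\times\overline{\R}$, including at $x=\pm\infty$ because $\fa(t,\cdot)\in V(\R)$ has limits at $\pm\infty$ and these are nonzero by \eqref{eq:bounded-away-1}; uniformity in $t$ of the approach $\fa(t,x)\to\fa(t,\pm\infty)$ comes from \eqref{eq:definition-tilde-cE}, which is what makes $1/\fa$ jointly continuous up to $x=\pm\infty$.

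Next I would treat the inner strip $[r^{-1},r]\times\overline{\R}$. Here $\fb$ is built from the four ingredients $1/\fa(r^{-1},\cdot)$, $1/\fa(r,\cdot)$, $p_-$, $p_+$, each a fixed element of $V(\R)\subset C(\overline{\R})$ (the first two lie in $V(\R)$ by Lemma~\ref{le:inverse-closedness}, using $\fa(r^{\pm1},\cdot)\ne0$ everywhere including $\pm\infty$), with coefficients $\ell_\pm(t)$ and $c_\pm(t)$ that are continuous functions of $t$ on $[r^{-1},r]$: $\ell_\pm$ are affine in $\ln t$, and $c_\pm$ is continuous because $t\mapsto\fa(t,\pm\infty)$ is continuous (it is the value at $\pm\infty$ of the $V(\R)$-valued continuous function $\fa$) and nonvanishing on $[r^{-1},r]$ by \eqref{eq:bounded-away-1}. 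A finite sum of products (continuous scalar function of $t$) $\times$ (fixed element of $C(\overline{\R})$) is continuous on $[r^{-1},r]\times\overline{\R}$, so $\fb$ is continuous on this strip, in particular at $x=\pm\infty$.

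Then I would check matching on the seam $t\in\{r^{-1},r\}$. At $t=r^{-1}$ one has $\ell_-(r^{-1})=1$ and $\ell_+(r^{-1})=0$, so the inner formula gives $1/\fa(r^{-1},x)+c_-(r^{-1})p_-(x)+c_+(r^{-1})p_+(x)$; but from \eqref{eq:b-construction-1} with $t=r^{-1}$, $c_\pm(r^{-1})=1/\fa(r^{-1},\pm\infty)-1\cdot 1/\fa(r^{-1},\pm\infty)-0=0$, so the inner value reduces to $1/\fa(r^{-1},x)$, which is exactly the outer value. The case $t=r$ is symmetric, using $\ell_-(r)=0$, $\ell_+(r)=1$, $c_\pm(r)=0$. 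Since the two closed pieces cover $\R_+\times\overline{\R}$ and agree on their intersection, the pasting lemma gives continuity of $\fb$ on all of $\R_+\times\overline{\R}$. Finally, for the identity: on $(\R_+\setminus(r^{-1},r))\times\overline{\R}$ it holds by the first branch of \eqref{eq:b-construction-2} together with the seam computation just done; and for $t\in(r^{-1},r)$, $x=\pm\infty$, evaluating the inner formula at $x=\pm\infty$ and using $p_\pm(+\infty)=1$, $p_\mp(+\infty)=0$ (and symmetrically at $-\infty$), we get $\ell_-(t)/\fa(r^{-1},\pm\infty)+\ell_+(t)/\fa(r,\pm\infty)+c_\pm(t)$, which by the definition of $c_\pm(t)$ in \eqref{eq:b-construction-1} collapses to $1/\fa(t,\pm\infty)$, as required. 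The only mildly delicate point is the joint continuity up to $x=\pm\infty$ of $1/\fa$ on the outer region, where I expect to lean on \eqref{eq:definition-tilde-cE} via Lemma~\ref{le:continuity}-type reasoning; everything else is bookkeeping with the affine weights $\ell_\pm$ and the explicit vanishing of $c_\pm$ at the endpoints.
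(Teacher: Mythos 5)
Your proposal is correct and follows essentially the same route as the paper: the heart of the argument in both is the observation that $\ell_\pm(r^{\mp1})=0$, $\ell_\pm(r^{\pm1})=1$ force $c_\pm(r^{\pm1})=0$ (giving agreement of the two branches at $t=r^{\pm1}$), together with the collapse $\fb(t,\pm\infty)=\ell_-(t)/\fa(r^{-1},\pm\infty)+\ell_+(t)/\fa(r,\pm\infty)+c_\pm(t)=1/\fa(t,\pm\infty)$ via $p_\pm(\pm\infty)=1$, $p_\mp(\pm\infty)=0$. You merely spell out the continuity on each closed piece and the pasting step more explicitly than the paper, which treats these as immediate; that elaboration is sound (and your appeal to \eqref{eq:definition-tilde-cE} for joint continuity up to $x=\pm\infty$ is harmless, though continuity of the $V(\R)$-valued map $t\mapsto\fa(t,\cdot)$ together with $V(\R)\subset C(\overline{\R})$ already suffices at each fixed $t$).
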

\begin{proof}
Since $\ell_\pm(r^{\mp 1})=0$ and $\ell_\pm(r^{\pm 1})=1$, we have $c_\pm(r)=c_\pm(r^{-1})=0$.
Therefore
\begin{equation}\label{eq:b-construction-3}
\fb(r^{\pm 1},x)=1/\fa(r^{\pm 1},x)
\quad\mbox{for all}\quad
x\in\R.
\end{equation}
Taking into account that $p_\mp(\pm\infty)=0$ and $p_\pm(\pm\infty)=1$, we get
from \eqref{eq:b-construction-1}--\eqref{eq:b-construction-2}
\begin{equation}\label{eq:b-construction-4}
\fb(t,\pm\infty)=
\frac{\ell_-(t)}{\fa(r^{-1},\pm\infty)}+
\frac{\ell_+(t)}{\fa(r,\pm\infty)}+
c_\pm(t)=\frac{1}{\fa(t,\pm\infty)}
\quad\mbox{for all}\quad t\in[r^{-1},r].
\end{equation}
Thus, the assertion of the lemma follows from \eqref{eq:b-construction-3}--\eqref{eq:b-construction-4}
and and the equality $\fb(t,x)=1/\fa(t,x)$ for all $(t,x)\in(\R_+\setminus[r^{-1},r])\times\overline{\R}$
(see \eqref{eq:b-construction-2}).
\end{proof}
\begin{lemma}\label{le:b-symbol}
Suppose $a\in\widetilde{\cE}(\R_+,V(\R))$ satisfies \eqref{eq:bounded-away-1}
and $\fb$ is the function defined by \eqref{eq:b-construction-1}--\eqref{eq:b-construction-2}
with $r>1$ such that \eqref{eq:bounded-away-3} holds (the existence of this
number is guaranteed by Lem\-ma~{\rm\ref{le:bounded-away}}). Then $\fb\in\widetilde{\cE}(\R_+,V(\R))$
and
\begin{equation}\label{eq:b-symbol-0}
\fb(t,\pm\infty)=1/\fa(t,\pm\infty)\
\mbox{ for all }\ t\in\R_+,
\quad
\fb(\xi,x)=1/\fa(\xi,x)\
\mbox{ for all }\ (\xi,x)\in\Delta\times\overline{\R}.
\end{equation}
\end{lemma}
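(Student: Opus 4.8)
The plan is to verify that $\fb$, as defined piecewise in \eqref{eq:b-construction-1}--\eqref{eq:b-construction-2}, lies in $\widetilde{\cE}(\R_+,V(\R))$ by checking, in turn, the four membership conditions: (i) $\fb\in C_b(\R_+,V(\R))$; (ii) $\fb\in SO(\R_+,V(\R))$, i.e. slow oscillation at $0$ and $\infty$; (iii) $\fb$ satisfies the shift condition \eqref{eq:definition-cE}; and (iv) $\fb$ satisfies the tail condition \eqref{eq:definition-tilde-cE}. Once $\fb\in\widetilde{\cE}(\R_+,V(\R))$ is established, the relations \eqref{eq:b-symbol-0} on the ``boundary'' follow almost immediately: the equality $\fb(t,\pm\infty)=1/\fa(t,\pm\infty)$ for all $t\in\R_+$ is exactly \eqref{eq:b-construction-4} together with the outer-piece definition in \eqref{eq:b-construction-2}, and the equality $\fb(\xi,x)=1/\fa(\xi,x)$ on $\Delta\times\overline{\R}$ follows because on each fiber $M_s(SO(\R_+))$ the values are computed, via Lemma~\ref{le:values-tilde}, along a sequence $t_j\to s$; for all large $j$ we have $t_j\in\R_+\setminus[r^{-1},r]$, where $\fb=1/\fa$, and since $\fa$ is bounded away from zero near $s$ by Lemma~\ref{le:bounded-away}, passing to the limit in $V(\R)$ gives $\fb(\xi,\cdot)=1/\fa(\xi,\cdot)$, hence $\fb(\xi,x)=1/\fa(\xi,x)$ for every $x\in\overline{\R}$.

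The bulk of the work is the stability of the algebra under the construction \eqref{eq:b-construction-2}. The key point is that on the outer region $(\R_+\setminus[r^{-1},r])\times\overline{\R}$ the function $\fb$ equals $1/\fa$, which is in $\widetilde{\cE}(\R_+,V(\R))$ by the inverse-closedness Lemma~\ref{le:inverse-closedness} applied to the restriction of $\fa$ away from $[r^{-1},r]$ (here \eqref{eq:bounded-away-3} furnishes the needed lower bound $|\fa|\ge 1/A(r)>0$ on $T_r\times\overline\R$). On the compact slab $[r^{-1},r]$ the function $\fb(t,\cdot)$ is a finite $\R$-linear combination, with coefficients $\ell_\pm(t)$ and $c_\pm(t)$ that are \emph{smooth bounded scalar functions of $t$ alone}, of the four fixed $V(\R)$-elements $1/\fa(r^{-1},\cdot)$, $1/\fa(r,\cdot)$, $p_-$, $p_+$. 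Since $t\mapsto\ell_\pm(t)$ and $t\mapsto c_\pm(t)$ are continuous (indeed real-analytic) on $[r^{-1},r]$ — continuity of $c_\pm$ uses $\fa(r^{\pm1},\pm\infty)\ne0$ from \eqref{eq:bounded-away-1} — the map $t\mapsto\fb(t,\cdot)$ is continuous into $V(\R)$ and bounded there; this gives (i). For (ii), slow oscillation is a condition only at $0$ and $\infty$, where $\fb=1/\fa$ coincides with an element already known to be slowly oscillating; on the compact slab there is nothing to check. For (iii), one writes $\fb^h-\fb$ on the slab as the same linear combination of $(1/\fa(r^{\pm1},\cdot))^h-(1/\fa(r^{\pm1},\cdot))$ and $p_\pm^h-p_\pm$; the first differences tend to $0$ in $V(\R)$ because $1/\fa(r^{\pm1},\cdot)\in V(\R)$ and translation is strongly continuous on the absolutely continuous functions (or: apply \eqref{eq:definition-cE} for $\fa$ together with inverse-closedness), while $\|p_\pm^h-p_\pm\|_V\le5\pi|h|/2$ by Lemma~\ref{le:elementary}(b); the coefficients $\ell_\pm,c_\pm$ are uniformly bounded on $[r^{-1},r]$. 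Combined with \eqref{eq:definition-cE} for $1/\fa$ on the outer region, this yields \eqref{eq:definition-cE} for $\fb$. For (iv), on the slab $\partial_x\fb(t,x)$ is the linear combination of $\partial_x(1/\fa)(r^{\pm1},x)$ and $(p_\pm)'(x)$; the tails of the former integrate to $0$ uniformly by \eqref{eq:definition-tilde-cE} for $1/\fa$, and $\int_{\R\setminus[-m,m]}|(p_\pm)'(x)|\,dx<e^{-2\pi m}$ by Lemma~\ref{le:elementary}(c), so again using the uniform bound on $\ell_\pm,c_\pm$ and \eqref{eq:definition-tilde-cE} for $1/\fa$ on the outer region one obtains \eqref{eq:definition-tilde-cE} for $\fb$.

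The main obstacle — and the only genuinely nontrivial point — is gluing: showing that the two pieces of $\fb$ match up into a single element of the algebra rather than merely a piecewise-defined function. The matching of values on the interfaces $t=r^{\pm1}$ (in all of $V(\R)$, not just pointwise) is handled by \eqref{eq:b-construction-3}, which records that $c_\pm(r)=c_\pm(r^{-1})=0$. But one must also confirm that the \emph{global} bounds and the uniform limits in (i)--(iv) are obtained by taking the maximum/sum of the separate contributions from the compact slab and from the outer region, exploiting that the slab $[r^{-1},r]$ is compact (so suprema over it of continuous quantities are finite and attained) and bounded away from $0$ and $\infty$ (so it does not interfere with the slow-oscillation limits). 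This is routine once organized correctly, but it is where care is needed; all the analytic estimates it rests on have already been supplied by Lemma~\ref{le:elementary}, Lemma~\ref{le:inverse-closedness}, and hypothesis \eqref{eq:bounded-away-3}.
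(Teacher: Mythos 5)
Your proposal is correct and follows essentially the same route as the paper: the four membership conditions are verified by splitting $\R_+$ into the outer region $T_r$ (where $\fb=1/\fa$ and the inverse-closedness estimates with the lower bound furnished by \eqref{eq:bounded-away-3} apply) and the compact slab $[r^{-1},r]$ (where $\fb(t,\cdot)$ is a linear combination of four fixed $V(\R)$-elements with bounded continuous coefficients, estimated via Lemma~\ref{le:elementary}), and the boundary identities follow by passing to the limit along sequences $t_j\to s$ that eventually lie in $T_r$. The only inaccuracy is the parenthetical claim that $c_\pm$ is smooth, indeed real-analytic --- $t\mapsto\fa(t,\pm\infty)$ is merely continuous --- but since you only use continuity and boundedness of the coefficients, nothing in the argument is affected.
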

\begin{proof}
We divide the proof into five steps:

(a) First we prove that the function $\fb$ belongs to the algebra $C_b(\R_+,V(\R))$.
Let
\[
T_r:=(0,r^{-1}]\cup[r,+\infty).
\]
By Lemma~\ref{le:b-construction},
\begin{equation}\label{eq:b-symbol-1}
\fb(t,x)=1/\fa(t,x), \quad (t,x)\in T_r\times\overline{\R}.
\end{equation}
Since $\fa(t,\cdot)$ belongs to $V(\R)$ for all $t\in\R_+$, by analogy with \eqref{eq:IC-1},
we infer from \eqref{eq:bounded-away-3} that
\begin{equation}\label{eq:b-symbol-2}
\|\fb(t,\cdot)\|_V\le A^2(r)\sup_{t\in T_r}\|\fa(t,\cdot)\|_V,
\quad t\in T_r.
\end{equation}
From \eqref{eq:bounded-away-2} and \eqref{eq:b-construction-1} it follows that
\begin{equation}\label{eq:b-symbol-3}
0\le\ell_\pm(t)\le 1,
\quad
|c_\pm(t)|\le 3A_\pm,
\quad t\in[r^{-1},r].
\end{equation}
From \eqref{eq:b-construction-2}, \eqref{eq:b-symbol-1}--\eqref{eq:b-symbol-3},
and Lemma~\ref{le:elementary}(a) it follows that for $t\in(r^{-1},r)$,
\begin{align}
\|\fb(t,\cdot)\|_V
&\le
\ell_-(t)\|\fb(r^{-1},\cdot)\|_V+\ell_+(t)\|\fb(r,\cdot)\|_V+|c_-(t)|\,\|p_-\|_V+|c_+(t)|\,\|p_+\|_V
\notag\\
&\le
2A^2(r)\sup_{t\in T_r}\|\fa(t,\cdot)\|_V+6A_-+6A_+.
\label{eq:b-symbol-4}
\end{align}
Combining \eqref{eq:b-symbol-2} and \eqref{eq:b-symbol-4}, we arrive at
\begin{equation}\label{eq:b-symbol-5}
\|\fb(\cdot,\cdot)\|_{C_b(\R_+,V(\R))}=\sup_{t\in\R_+}\|\fb(t,\cdot)\|_V
\le
2A^2(r)\sup_{t\in T_r}\|\fa(t,\cdot)\|_V+6A_-+6A_+<+\infty.
\end{equation}

From \eqref{eq:bounded-away-3} and \eqref{eq:b-symbol-1}--\eqref{eq:b-symbol-2},
by analogy with \eqref{eq:IC-3}, we obtain for $t,\tau\in T_r$,
\begin{align*}
\|\fb(t,\cdot)-\fb(\tau,\cdot)\|_V
&\le
\|\fb(t,\cdot)\|_V\|\fb(\tau,\cdot)\|_V
\|\fa(t,\cdot)-\fa(\tau,\cdot)\|_V
\notag\\
&\le
A^4(r)\left(\sup_{t\in T_r}\|\fa(t,\cdot)\|_V\right)^2
\|\fa(t,\cdot)-\fa(\tau,\cdot)\|_V.
\end{align*}
Since $\fa$ is a continuous $V(\R)$-valued function, from the above inequality
we conclude that $t\mapsto\fb(t,\cdot)$ is a continuous $V(\R)$-valued function for
$t\in T_r$.

Obviously, $\ell_\pm$ are continuous on $[r^{-1},r]$. Since $\fa$ is a continuous
$V(\R)$-valued function, taking into account \eqref{eq:bounded-away-2}, we also have
for $t,\tau\in[r^{-1},r]$,
\[
\left|\frac{1}{\fa(t,\pm\infty)}-\frac{1}{\fa(\tau,\pm\infty)}\right|
=
\frac{|\fa(t,\pm\infty)-\fa(\tau,\pm\infty)|}{|\fa(t,\pm\infty)|\,|\fa(\tau,\pm\infty)|}
\le
A_\pm^2\|\fa(t,\cdot)-\fa(\tau,\cdot)\|_V.
\]
From this inequality and the definitions of $c_\pm$ in \eqref{eq:b-construction-1}
we see that the functions $c_\pm$ are continuous on $[r^{-1},r]$. Therefore,
from the definition \eqref{eq:b-construction-2} we conclude that $t\mapsto\fb(t,\cdot)$
is a continuous $V(\R)$-valued function on $[r^{-1},r]$. From the continuity of the
$V(\R)$-valued function $t\mapsto\fb(t,\cdot)$ on $\R_+$ and inequality
\eqref{eq:b-symbol-5} we conclude that $\fb\in C_b(\R_+,V(\R))$.

\medskip
(b) Now we prove that $\fb\in SO(\R_+,V(\R))$. By analogy with \eqref{eq:IC-4},
from \eqref{eq:bounded-away-3} and \eqref{eq:b-symbol-1} we obtain
\[
\|\fb(t,\cdot)-\fb(\tau,\cdot)\|_{L^\infty(\R)}
\le
A^2(r)\|\fa(t,\cdot)-\fa(\tau,\cdot)\|_{L^\infty(\R)},
\quad t,\tau\in T_r.
\]
Since $a\in SO(\R_+,V(\R))$, from this estimate we obtain
\[
\lim_{\nu\to s}\operatorname{cm}_\nu^C(\fb)\le A^2(r)\lim_{\nu\to s}\operatorname{cm}_\nu^C(\fa)=0,
\]
which means that $\fb\in SO(\R_+,V(\R))$.

\medskip
(c) On this step we show that $\fb\in\cE(\R_+,V(\R))$. By analogy with \eqref{eq:IC-5},
taking into account that the norm of $V(\R)$ is translation-invariant, from
\eqref{eq:bounded-away-3} and \eqref{eq:b-symbol-1}--\eqref{eq:b-symbol-2}
we get for $h\in\R$ and $t\in T_r$,
\begin{align}
\|\fb(t,\cdot)-\fb^h(t,\cdot)\|_V
&\le
\|\fb(t,\cdot)\|_V\|\fb^h(t,\cdot)\|_V\|\fa(t,\cdot)-\fa^h(t,\cdot)\|_V
\notag\\
&\le
C(\fa)\sup_{t\in\R_+}\|\fa(t,\cdot)-\fa^h(t,\cdot)\|_V,
\label{eq:b-symbol-6}
\end{align}
where
\[
C(\fa):=A^4(r)\left(\sup_{t\in T_r}\|\fa(t,\cdot)\|_V\right)^2.
\]
On the other hand, from \eqref{eq:b-construction-2}, \eqref{eq:b-symbol-1},
\eqref{eq:b-symbol-3}, \eqref{eq:b-symbol-6}, and Lemma~\ref{le:elementary}(b)
it follows that for $h\in\R$ and $t\in(r^{-1},r)$,
\begin{align}
\|\fb(t,\cdot)-\fb^h(t,\cdot)\|_V
\le &
\ell_-(t)\|\fb(r^{-1},\cdot)-\fb^h(r^{-1},\cdot)\|_V
+
\ell_+(t)\|\fb(r,\cdot)-\fb^h(r,\cdot)\|_V
\notag\\
&+
|c_-(t)|\,\|p_--p_-^h\|_V+|c_+(t)|\,\|p_+-p_+^h\|_V
\notag\\
\le &
2C(\fa)\sup_{t\in\R_+}\|\fa(t,\cdot)-\fa^h(t,\cdot)\|_V
+
\frac{15\pi}{2}(A_-+A_+)|h|.
\label{eq:b-symbol-7}
\end{align}
Combining \eqref{eq:b-symbol-6}--\eqref{eq:b-symbol-7}, we arrive at
\[
\sup_{t\in\R_+}\|\fb(t,\cdot)-\fb^h(t,\cdot)\|_V
\le
2C(\fa)\sup_{t\in\R_+}\|\fa(t,\cdot)-\fa^h(t,\cdot)\|_V
+
\frac{15\pi}{2}(A_-+A_+)|h|.
\]
Since $\fa\in\cE(\R_+,V(\R))$, the right-hand side of the above inequality tends
to zero as $|h|\to 0$. Hence
\[
\lim_{|h|\to 0}\sup_{t\in\R_+}\|\fb(t,\cdot)-\fb^h(t,\cdot)\|_V=0.
\]
Thus, $b\in\cE(\R_+,V(\R))$.

\medskip
(d) Now we prove that $\fb\in\widetilde{\cE}(\R_+,V(\R))$. From \eqref{eq:b-symbol-1}
we obtain
\[
\partial_x\fb(t,x)=-\fa^{-2}(t,x)\partial_x\fa(t,x),
\quad
(t,x)\in T_r\times\R.
\]
From this identity and \eqref{eq:bounded-away-3} it follows that for all
$m>0$ and $t\in T_r$,
\begin{equation}\label{eq:b-symbol-8}
\int_{\R\setminus[-m,m]}|\partial_x\fb(t,x)|\,dx
\le
A^2(r)\sup_{t\in\R_+}\int_{\R\setminus[-m,m]}|\partial_x\fa(t,x)|\,dx.
\end{equation}
On the other hand, from \eqref{eq:b-symbol-1}, \eqref{eq:b-symbol-3},
\eqref{eq:b-symbol-8}, and Lemma~\ref{le:elementary}(c) it follows that
for all $t\in(r^{-1},r)$ and $m>0$,
\begin{align}
\int_{\R\setminus[-m,m]}|\partial_x\fb(t,x)|\,dx
\le&
\ell_-(t)\int_{\R\setminus[-m,m]}|\partial_x\fb(r^{-1},x)|\,dx
+
\ell_+(t)\int_{\R\setminus[-m,m]}|\partial_x\fb(r,x)|\,dx
\notag\\
&+
|c_-(t)|\int_{\R\setminus[-m,m]}|p_-'(x)|\,dx
+
|c_+(t)|\int_{\R\setminus[-m,m]}|p_+'(x)|\,dx
\notag\\
\le&
2A^2(r)\sup_{t\in\R_+}\int_{\R\setminus[-m,m]}|\partial_x\fa(t,x)|\,dx
+3(A_-+A_+)e^{-2\pi m}.
\label{eq:b-symbol-9}
\end{align}
Combining \eqref{eq:b-symbol-8}--\eqref{eq:b-symbol-9}, we obtain for $m>0$,
\[
\sup_{t\in\R_+}\int_{\R\setminus[-m,m]}|\partial_x\fb(t,x)|\,dx
\le
2A^2(r)\sup_{t\in\R_+}\int_{\R\setminus[-m,m]}|\partial_x\fa(t,x)|\,dx
+3(A_-+A_+)e^{-2\pi m}.
\]
Since $\fa\in\widetilde{\cE}(\R_+,V(\R))$, the right-hand side of the
above inequality tends to zero as $m\to\infty$. This implies that
\[
\lim_{m\to\infty}\sup_{t\in\R_+}\int_{\R\setminus[-m,m]}|\partial_x\fb(t,x)|\,dx=0.
\]
Thus, $\fb\in\widetilde{\cE}(\R_+,V(\R))$.

\medskip
(e) Finally, we prove \eqref{eq:b-symbol-0}. The first equality in \eqref{eq:b-symbol-0}
was proved in Lemma~\ref{le:b-construction}. Fix $s\in\{0,\infty\}$.
Since $\fa,\fb\in\widetilde{\cE}(\R_+,V(\R))$, from Lemma~\ref{le:values} it follows
that for each $\xi\in M_s(SO(\R_+))\subset\Delta$ there exists a sequence $\{t_j\}_{j\in\N}\subset\R_+$
and functions $\fa(\xi,\cdot),\fb(\xi,\cdot)\in V(\R)$ such that $t_j\to s$ as $j\to\infty$ and
\begin{equation}\label{eq:b-symbol-10}
\fa(\xi,x)=\lim_{j\to\infty}\fa(t_j,x),
\quad
\fb(\xi,x)=\lim_{j\to\infty}\fb(t_j,x),
\quad
x\in\overline{\R}.
\end{equation}
For all sufficiently large $j$, one has $t_j\in T_r$. Then from \eqref{eq:b-symbol-1}
we get $\fb(t_j,x)=1/\fa(t_j,x)$ for all sufficiently large $j$ and all $x\in\overline{\R}$.
From this equality and \eqref{eq:b-symbol-10} we obtain the second equality in \eqref{eq:b-symbol-0}.
\end{proof}
\subsection{Regularization of Mellin PDO's with Symbols in \boldmath{$\widetilde{\cE}(\R_+,V(\R))$}}
From \cite[Theorem~4.1]{K09} we can extract the following.
\begin{lemma}\label{le:compactness-PDO}
If $\fc\in\widetilde{\cE}(\R_+,V(\R))$, then $\Op(\fc)\in\cK(L^p(\R_+,d\mu))$
if and only if
\begin{equation}\label{eq:compactness-PDO}
\fc(t,\pm\infty)=0
\ \text{ for all }\
t\in\R_+,
\quad
\fc(\xi,x)=0
\ \text{ for all }\
(\xi,x)\in\Delta\times\overline{\R}.
\end{equation}
\end{lemma}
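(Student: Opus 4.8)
The statement is an ``if and only if'' characterizing when $\Op(\fc)$ is compact for $\fc\in\widetilde{\cE}(\R_+,V(\R))$, and the natural strategy is to transfer it to the Fourier setting via the isometric isomorphism $E$ of \eqref{eq:def-E} and the intertwining relation \eqref{eq:translation-PDO}--\eqref{eq:translation-symbols}, then invoke the cited compactness criterion from \cite[Theorem~4.1]{K09}. Concretely, under the substitution $\fa(t,x)=a(\ln t,x)$, the algebra $\widetilde{\cE}(\R_+,V(\R))$ corresponds to the analogous algebra of slowly oscillating $V(\R)$-valued symbols on $\R$ (in the variable $\ln t\in\R$, with $0$ and $\infty$ replaced by $-\infty$ and $+\infty$), and $\Op(\fc)$ is compact on $L^p(\R_+,d\mu)$ if and only if $c(x,D)=E\,\Op(\fc)\,E^{-1}$ is compact on $L^p(\R)$. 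The content of \cite[Theorem~4.1]{K09} is precisely a compactness criterion for such Fourier PDO's, stated in terms of the vanishing of the symbol together with its limiting values along the fibers of the maximal ideal space over the points at infinity.

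The main work is therefore bookkeeping: I would first recall (or set up the notation for) the Fourier-side algebra and state that the map $\fc\mapsto c$, $c(y,x):=\fc(e^y,x)$, is an isometric algebra isomorphism carrying $\widetilde{\cE}(\R_+,V(\R))$ onto its Fourier analogue, carrying the fibers $M_0(SO(\R_+))$ and $M_\infty(SO(\R_+))$ onto the corresponding fibers over $-\infty$ and $+\infty$, and carrying the boundary values $\fc(t,\pm\infty)$, $\fc(\xi,x)$ to the corresponding boundary values of $c$. Then the vanishing conditions \eqref{eq:compactness-PDO} translate verbatim into the vanishing conditions appearing in \cite[Theorem~4.1]{K09} for $c$. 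Since $E$ is an isometric isomorphism of $L^p(\R_+,d\mu)$ onto $L^p(\R)$, compactness of $\Op(\fc)$ is equivalent to compactness of $c(y,D)$, and the cited theorem closes the loop in both directions.

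The one point requiring a little care is the identification of limiting values on $\Delta$. By Lemma~\ref{le:values-tilde}, for $\fc\in\widetilde{\cE}(\R_+,V(\R))$ the value $\fc(\xi,\cdot)\in V(\R)$ for $\xi\in M_s(SO(\R_+))$ is obtained as a $V(\R)$-norm limit $\fc(\xi,\cdot)=\lim_j\fc(t_j,\cdot)$ along a suitable sequence $t_j\to s$; on the Fourier side the corresponding value of $c$ at a point in the fiber over $\pm\infty$ is defined the same way along $\ln t_j\to\pm\infty$. One has to observe that these two constructions agree under the isomorphism, so that the set of boundary values $\{\fc(\xi,x):(\xi,x)\in\Delta\times\overline\R\}$ is carried bijectively onto the set of boundary values of $c$, and in particular $\fc$ vanishes on $\Delta\times\overline\R$ precisely when $c$ vanishes on the corresponding set. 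This is the step I would expect to be the main (mild) obstacle --- not because it is deep, but because it requires carefully matching the definition of the ``values at $\Delta$'' used in \cite{K09} with the one recalled in Section~\ref{sec:values} here. Once this identification is in place, the lemma follows immediately from \cite[Theorem~4.1]{K09}, and indeed it is nothing more than the Mellin reformulation of that theorem restricted to the subalgebra $\widetilde{\cE}(\R_+,V(\R))$.
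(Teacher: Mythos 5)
Your proposal is correct and matches the paper's treatment: the paper offers no proof of this lemma at all, stating only that it ``can be extracted from Theorem~4.1 of \cite{K09}'', which is exactly the citation-plus-bookkeeping argument you describe, including the need to match the boundary values $\fc(t,\pm\infty)$ and $\fc(\xi,x)$ on $\Delta\times\overline{\R}$ with those appearing in the cited criterion. One small remark: \cite{K09} is already the Mellin-setting paper (it is \cite{K06} that works with Fourier PDO's), so the conjugation by $E$ and the substitution $\fc(t,x)=c(\ln t,x)$ you build your argument around is superfluous here --- that translation was carried out once and for all in \cite{K09}, and its Theorem~4.1 applies directly to $\Op(\fc)$ on $L^p(\R_+,d\mu)$.
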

Now we are in a position to prove the main result of the paper.
\begin{theorem}\label{th:Fredholmness-PDO}
Suppose $\fa\in\widetilde{\cE}(\R_+,V(\R))$.
\begin{enumerate}
\item[{\rm(a)}]
If the Mellin pseudodifferential operator $\Op(\fa)$ is Fredholm on the space
$L^p(\R_+,d\mu)$, then
\begin{equation}\label{eq:Fredholmness-PDO-1}
\fa(t,\pm\infty)\ne 0
\ \text{ for all }\
t\in\R_+,
\quad
\fa(\xi,x)\ne 0
\ \text{ for all }\
(\xi,x)\in\Delta\times\overline{\R}.
\end{equation}

\item[{\rm(b)}]
If \eqref{eq:Fredholmness-PDO-1} holds, then the Mellin pseudodifferential
operator $\Op(\fa)$ is Fredholm on the space $L^p(\R_+,d\mu)$ and each its
regularizer has the form $\Op(\fb)+K$, where $K$ is a compact operator
on the space $L^p(\R_+,d\mu)$ and $\fb\in\widetilde{\cE}(\R_+,V(\R))$
is such that
\begin{equation}\label{eq:Fredholmness-PDO-2}
\fb(t,\pm\infty)=1/\fa(t,\pm\infty)
\mbox{ for all }\ t\in\R_+,
\
\fb(\xi,x)=1/\fa(\xi,x)
\mbox{ for all }\ (\xi,x)\in\Delta\times\overline{\R}.
\end{equation}
\end{enumerate}
\end{theorem}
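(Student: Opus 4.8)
The plan is to combine the necessity part from the known Fredholm criterion with the explicit construction of the regularizer symbol carried out in Lemmas~\ref{le:bounded-away}--\ref{le:b-symbol}, and then to upgrade ``regularizer of the form $\Op(\fb)+K$'' to ``\emph{each} regularizer'' by appealing to the general fact that any two regularizers of a Fredholm operator differ by a compact operator. For part~(a), I would note that the hypothesis $\fa\in\widetilde{\cE}(\R_+,V(\R))\subset\cE(\R_+,V(\R))$ places $\fa$ in the setting of the Fredholm criterion \cite[Theorem~3.6]{KKL14} (equivalently \cite[Theorem~4.3]{K09}), whose necessary condition for Fredholmness of $\Op(\fa)$ is precisely the non-degeneracy \eqref{eq:Fredholmness-PDO-1} on the ``boundary'' \eqref{eq:boundary}. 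So (a) is essentially a citation.

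For part~(b), assume \eqref{eq:Fredholmness-PDO-1}. First I would invoke Lemma~\ref{le:bounded-away} to produce the number $r>1$ for which \eqref{eq:bounded-away-3} holds, and then Lemma~\ref{le:b-symbol} to obtain $\fb\in\widetilde{\cE}(\R_+,V(\R))$ satisfying \eqref{eq:b-symbol-0}, i.e.\ $\fb=1/\fa$ on the ``boundary'' \eqref{eq:boundary}. Consider the semi-commutators $\fc_1:=\fa\fb-1$ and $\fc_2:=\fb\fa-1=\fc_1$ (pointwise multiplication is commutative), which lie in $\widetilde{\cE}(\R_+,V(\R))$ since this algebra is closed under products and contains constants. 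On the ``boundary'' \eqref{eq:boundary}, $\fa\fb=\fa\cdot(1/\fa)=1$, so $\fc_1$ vanishes on \eqref{eq:boundary}; hence by Lemma~\ref{le:compactness-PDO} the operator $\Op(\fc_1)$ is compact on $L^p(\R_+,d\mu)$. Now apply Theorem~\ref{th:comp-semi-commutators-PDO}:
\[
\Op(\fa)\Op(\fb)\simeq\Op(\fa\fb)=\Op(1+\fc_1)=I+\Op(\fc_1)\simeq I,
\]
and symmetrically $\Op(\fb)\Op(\fa)\simeq I$. Therefore $\Op(\fa)$ is Fredholm and $\Op(\fb)$ is a (two-sided) regularizer for it.

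Finally, to show that \emph{every} regularizer of $\Op(\fa)$ has the asserted form, recall from the introduction (see also \cite[Chap.~4, Section~7]{GK92}) that once $\Op(\fa)$ is Fredholm, any two of its regularizers differ by a compact operator. Thus if $R$ is an arbitrary regularizer of $\Op(\fa)$, then $R-\Op(\fb)=:K\in\cK(L^p(\R_+,d\mu))$, i.e.\ $R=\Op(\fb)+K$, which together with \eqref{eq:b-symbol-0} gives \eqref{eq:Fredholmness-PDO-2} and completes the proof. I expect the only genuinely substantive point to be the verification that $\Op(\fc_1)$ is compact, which is handled cleanly by Lemma~\ref{le:compactness-PDO} precisely because the construction of $\fb$ in Lemma~\ref{le:b-symbol} was engineered so that $\fb=1/\fa$ holds \emph{on all of} \eqref{eq:boundary}, including the fibers $\Delta\times\overline{\R}$; the uniform-in-$V(\R)$ attainment of limiting values (Lemma~\ref{le:values-tilde}), used inside Lemma~\ref{le:b-symbol}, is what makes this possible, so there is no remaining obstacle here beyond assembling the cited ingredients.
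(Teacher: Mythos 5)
Your proposal is correct and follows essentially the same route as the paper: part (a) by citing the necessity portion of the known Fredholm criterion, and part (b) by combining Lemma~\ref{le:b-symbol}, Lemma~\ref{le:compactness-PDO} applied to $\fc=\fa\fb-1$, and Theorem~\ref{th:comp-semi-commutators-PDO}, then using that any two regularizers differ by a compact operator. The only difference is that you spell out the last step (upgrading to \emph{each} regularizer) slightly more explicitly than the paper does.
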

\begin{proof}
Part (a) follows from the necessity portion of \cite[Theorem~4.3]{K09}, which was
obtained on the base of \cite[Theorem~12.2]{K06} and \eqref{eq:def-E},
\eqref{eq:translation-PDO}--\eqref{eq:translation-symbols}.

The proof of part (b) is analogous to the proof of the sufficiency portion
of \cite[Theorem~12.2]{K06}.  If \eqref{eq:Fredholmness-PDO-1} holds, then
by Lemma~\ref{le:b-symbol} there exists a function $\fb\in\widetilde{\cE}(\R_+,V(\R))$
such that \eqref{eq:Fredholmness-PDO-2} is fulfilled. Therefore, the function
$\fc:=\fa\fb-1$ belongs to $\widetilde{\cE}(\R_+,V(\R))$ and
\eqref{eq:compactness-PDO} holds. By Lemma~\ref{le:compactness-PDO}, the operator
$\Op(\fc)=\Op(\fa\fb)-I$ is compact on $L^p(\R_+,d\mu)$. From this observation
and Theorem~\ref{th:comp-semi-commutators-PDO} we obtain
\[
\Op(\fa)\Op(\fb)\simeq\Op(\fa\fb)\simeq I,
\quad
\Op(\fb)\Op(\fa)\simeq\Op(\fa\fb)\simeq I.
\]
Thus, the operator $\Op(\fa)$ is Fredholm and each its regularizer is of the form
$\Op(\fb)+K$, where $K\in\cK(L^p(\R_+,d\mu))$.
\end{proof}
For a symbol $\fa\in C^\infty(\R_+\times\R)$ satisfying \eqref{eq:Hoermander}--\eqref{eq:Grushin}
the corresponding result was obtained in \cite[Theorem~2.6]{R98}.
\subsection*{Acknowledgments}
This work was partially supported by the Funda\c{c}\~ao para a Ci\^encia e a Tecnologia
(Portuguese Foundation for Science and Technology) through the projects PEst-OE/MAT/
\linebreak
UI0297/2014
(Centro de Matem\'atica e Aplica\c{c}\~oes) and PEst-OE/MAT/UI4032/2014
(Centro de An\'alise Funcional e Aplica\c{c}\~oes). The second author was also
supported by the CONACYT Project No. 168104 (M\'exico) and by PROMEP (M\'exico)
via ``Proyecto de Redes".


\label{lastpage-01}
\end{document}